\begin{document}
\label{pageinit}

\date{}


\title{Uniform approximation of fractional derivatives\\
and integrals with application to fractional\\
differential equations}

\author{Hassan Khosravian-Arab$^{1}$, Delfim F. M. Torres$^{2,\star}$}

\maketitle


\noindent $^1$ Department of Applied Mathematics,
Faculty of Mathematics and Computer Science,\\
Amirkabir University of Technology,
Hafez Ave, Tehran, Iran.\\
\noindent $^2$ CIDMA --- Center for Research and Development in Mathematics and Applications,\\
Department of Mathematics, University of Aveiro, 3810-193 Aveiro, Portugal.\\

\noindent $^\star$ \emph{Corresponding Author}. E-mail: delfim@ua.pt

\markboth{H. Khosravian-Arab and D. F. M. Torres}{Uniform
approximation of fractional derivatives and integrals}

\footnotetext[2010]{\textit{{\bf Mathematics Subject Classification}}: 26A33, 34A08, 41A10, 41A25.\\
Keywords: Caputo and Riemann--Liouville fractional derivatives; Bernstein polynomials;
uniform approximation; rate of convergence; stability; fractional differential equations.}


\begin{abstract}
It is well known that for every $f\in C^m$ there exists a polynomial
$p_n$ such that $p^{(k)}_n\rightarrow f^{(k)}$, $k=0,\ldots,m$.
Here we prove such a result for fractional (non-integer)
derivatives. Moreover, a numerical method is proposed
for fractional differential equations. The convergence rate and stability
of the proposed method are obtained. Illustrative examples are discussed.
\end{abstract}


\section{Introduction}

The analysis and design of many physical systems require
the solution of a fractional differential equation
\cite{book:Benchohra,KilSriTru:06:The}.
Examples include fractional oscillation equations
\cite{LinLiu:07:Fra,Pod:99:Fra(b)},
linear and nonlinear fractional Bagley--Torvik equations
\cite{El-El-El:05:Num,Yua:09:Sol}, Basset equations
\cite{EdwForSim:02:The}, fractional
Lorenz systems \cite{AloNooNaz:09:Hom},
and population models with fractional order derivatives
\cite{MyID:215,Xu:09:Ana}.

Several methods have recently been proposed to address
both linear and nonlinear \emph{Fractional Ordinary Differential Equations} (FODEs):
an analytical method to solve linear FODEs is given in \cite{HuLuoLu:08:Ana},
homotopy methods for the analysis of linear and nonlinear FODEs are used in
\cite{HosMil:09:An,HasAbdMom:09:Hom}, Adams multistep methods for nonlinear FODEs
are investigated in \cite{Gar:09:On}, and a differential transform method
is developed in \cite{AytIbr:07:Sol}.
Another widely used procedure consists in transforming
a differential equation with fractional derivatives
into a system of differential equations of integer order
\cite{MyID:225}. Other numerical schemes to solve FODEs can be found in
\cite{ForCon:06:Com,SaaDeh:09:Fra}.

Here we propose the use of Bernstein polynomials to approximate
fractional derivatives and integrals.
Numerical algorithms based on Gr\"{u}nwald and modified Gr\"{u}nwald
approximation formulas were proposed and analyzed
in \cite{Delb:Elliott:94,Elliott:93}
for the approximate evaluation of certain Hadamard integrals,
where Bernstein polynomials are used to derive an error bound
\cite{Delb:Elliott:94,Elliott:93}.
Recently, Bernstein polynomials have been used to approximate the
solution of fractional integro-differential equations \cite{Mohammed:12},
fractional heat- and wave-like equations \cite{Rostamy:12},
and multi-dimensional fractional optimal control problems \cite{Ali_Rost:Bal:JVC}.
For more on approximation of fractional derivatives and its applications,
we refer the reader to \cite{MyID:258} and references therein.

Despite numerous results available in the literature,
there is still a significant demand for readily usable numerically algorithms
to handle mathematical problems involving fractional derivatives and
integrals. To date, such algorithms have been developed but only to a rather
limited extent. Here we specify, develop and analyze,
a general numerical algorithm and present such scheme
in a rigorous but accessible way, understandable to an applied scientist.

The paper is organized as follows.
Section~\ref{sec:2} reviews, briefly, the necessary definitions
and results concerning Bernstein polynomials.
We also recall the standard definitions of fractional integral
and fractional derivative in the sense of Riemann--Liouville and Caputo.
In Section~\ref{sec:3} we introduce the basic idea of our method.
Uniform approximation formulas for fractional derivatives
and integrals are proposed. Some useful results,
and the convergence of the approximation formulas, are proved.
In Section~\ref{sec:4} we analyze the results obtained
in Section~\ref{sec:3} computationally. Finally,
in Section~\ref{sec:5} we apply our approximation formula
for fractional derivatives to the study of a FODE.
We begin by proving the convergence and stability of the proposed numerical method.
Then, some concrete examples are given. The examples considered
show that our method is effective for solving both linear
and nonlinear FODEs in a computationally efficient way.


\section{Preliminaries}
\label{sec:2}

The Weierstrass approximation theorem is a central result of mathematics.
It states that every continuous function defined on a closed interval $[a,b]$
can be uniformly approximated by a polynomial function.

\begin{theorem}[The Weierstrass approximation theorem]
Let $f\in C[a,b]$. For any $\varepsilon>0$,
there exists a polynomial $p_n$ such that
$|f(x)-p_n(x)|\leq\varepsilon$ for all $x\in[a,b]$.
\end{theorem}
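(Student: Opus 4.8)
The plan is to give a constructive proof via Bernstein polynomials, which is the natural route here since Bernstein polynomials are the central object of the paper. First I would reduce to the unit interval: given $f \in C[a,b]$, set $g(t) = f\bigl(a + (b-a)t\bigr)$ so that $g \in C[0,1]$; a polynomial $q_n$ with $|g(t) - q_n(t)| \le \varepsilon$ on $[0,1]$ then yields $p_n(x) = q_n\bigl((x-a)/(b-a)\bigr)$ with $|f(x) - p_n(x)| \le \varepsilon$ on $[a,b]$. Hence it suffices to treat the case $[a,b] = [0,1]$.

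For $g \in C[0,1]$ I would introduce the Bernstein polynomial $B_n g(x) = \sum_{k=0}^{n} g(k/n)\binom{n}{k} x^k (1-x)^{n-k}$ and prove that $B_n g \to g$ uniformly on $[0,1]$. The key algebraic facts are the three moment identities $\sum_{k=0}^{n} \binom{n}{k} x^k (1-x)^{n-k} = 1$, $\sum_{k=0}^{n} \tfrac{k}{n}\binom{n}{k} x^k (1-x)^{n-k} = x$, and $\sum_{k=0}^{n} \bigl(\tfrac{k}{n} - x\bigr)^2 \binom{n}{k} x^k (1-x)^{n-k} = \frac{x(1-x)}{n} \le \frac{1}{4n}$, all obtained by differentiating $(x+y)^n = \sum_{k} \binom{n}{k} x^k y^{n-k}$ once or twice in $x$, multiplying by the appropriate power of $x$, and then setting $y = 1-x$.

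Using the first identity, write $g(x) - B_n g(x) = \sum_{k=0}^{n} \bigl(g(x) - g(k/n)\bigr)\binom{n}{k} x^k (1-x)^{n-k}$. Since $g$ is uniformly continuous on the compact interval $[0,1]$, given $\varepsilon > 0$ there is $\delta > 0$ with $|g(s) - g(t)| < \varepsilon/2$ whenever $|s - t| < \delta$. I would split the sum according to whether $|k/n - x| < \delta$ or $|k/n - x| \ge \delta$: the first group contributes at most $\varepsilon/2$ by uniform continuity together with the first moment identity, while on the second group $|g(x) - g(k/n)| \le 2M$ with $M = \max_{[0,1]} |g|$, and a Chebyshev-type estimate gives $\sum_{|k/n - x| \ge \delta} \binom{n}{k} x^k (1-x)^{n-k} \le \delta^{-2} \sum_{k} \bigl(\tfrac{k}{n} - x\bigr)^2 \binom{n}{k} x^k (1-x)^{n-k} \le \frac{1}{4 n \delta^2}$. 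Thus $|g(x) - B_n g(x)| \le \varepsilon/2 + M/(2 n \delta^2)$ uniformly in $x \in [0,1]$, and choosing $n$ so large that $M/(2 n \delta^2) < \varepsilon/2$ completes the argument; transferring $B_n g$ back to $[a,b]$ as above gives the polynomial $p_n$ claimed in the statement.

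The argument has no deep obstacle. The one step demanding genuine care is the second-moment identity $\sum_{k} (k/n - x)^2 \binom{n}{k} x^k (1-x)^{n-k} = x(1-x)/n$, which is exactly what forces the ``far'' part of the sum to be uniformly small; everything else is bookkeeping with the triangle inequality and uniform continuity. Alternative routes---convolution with the Landau kernels $c_n(1-t^2)^n$, or an appeal to the Stone--Weierstrass theorem---would also work, but the Bernstein construction is the one consistent with the rest of this paper.
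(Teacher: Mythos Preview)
Your proof is correct and complete. Note, however, that the paper does not actually give its own proof of the Weierstrass approximation theorem: it merely states the result, remarks that ``one of the most elegant proofs uses Bernstein polynomials,'' and then states Bernstein's theorem (Theorem~\ref{thm1}) with a citation to \cite{Dav,phi} rather than a proof. Your argument supplies precisely the details the paper omits, and it follows exactly the route the paper points to---Bernstein polynomials plus the standard near/far split controlled by the second-moment identity---so there is no divergence in approach, only in level of detail.
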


There are many proofs to the Weierstrass theorem.
One of the most elegant proofs
uses Bernstein polynomials.

\begin{definition}
\label{def1}
Let $f$ be continuous on $[0,1]$. The Bernstein
polynomial of degree $n$ with respect to $f$ is defined as
\begin{equation}
\label{eq:bp}
B_n(f;x)=\sum_{i=0}^{n}\binom{n}{i}f\left(\frac{i}{n}\right)x^i(1-x)^{n-i}.
\end{equation}
\end{definition}

The next theorem is due to Bernstein,
and provides a proof to the Weierstrass theorem.
Bernstein's Theorem~\ref{thm1} not only proves the existence of
polynomials of uniform approximation, but also provides a simple
explicit representation for them.

\begin{theorem}[Bernstein's theorem \cite{Dav,phi}]
\label{thm1}
Let $f$ be bounded on $[0,1]$. Then,
$\lim_{n\rightarrow\infty}B_n(f;x)=f(x)$
at any point $x\in[0,1]$ at which $f$ is continuous. Moreover,
if $f\in C[0,1]$, then the limit holds uniformly in $[0,1]$.
\end{theorem}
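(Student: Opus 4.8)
The plan is to follow the classical elementary argument based on three ``moment'' identities for the Bernstein basis functions $p_{n,i}(x)=\binom{n}{i}x^i(1-x)^{n-i}$. First I would establish, by the binomial theorem together with one and two differentiations in an auxiliary variable (or, equivalently, by reading $p_{n,i}(x)$ as the probabilities of a $\mathrm{Binomial}(n,x)$ law), the identities
\[
\sum_{i=0}^n p_{n,i}(x)=1,\qquad \sum_{i=0}^n \frac{i}{n}\,p_{n,i}(x)=x,\qquad \sum_{i=0}^n \Bigl(\frac{i}{n}-x\Bigr)^{2} p_{n,i}(x)=\frac{x(1-x)}{n}.
\]
The first of these lets me write $B_n(f;x)-f(x)=\sum_{i=0}^n\bigl(f(i/n)-f(x)\bigr)p_{n,i}(x)$, so the whole problem reduces to controlling this weighted average of increments of $f$.

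Next, fix a point $x_0\in[0,1]$ at which $f$ is continuous and let $\varepsilon>0$. By continuity at $x_0$ there is $\delta>0$ with $|f(t)-f(x_0)|<\varepsilon/2$ whenever $|t-x_0|<\delta$ (and $t\in[0,1]$, which is automatic since every $i/n$ lies in $[0,1]$). Splitting the index set into $A=\{i:|i/n-x_0|<\delta\}$ and its complement $B$, the part of the sum over $A$ is at most $(\varepsilon/2)\sum_i p_{n,i}(x_0)=\varepsilon/2$. For the part over $B$ I would use boundedness, $|f|\le M$, to bound each increment by $2M$, and the elementary inequality $1\le(i/n-x_0)^2/\delta^2$ valid on $B$; combined with the variance identity this gives
\[
\Bigl|\sum_{i\in B}\bigl(f(i/n)-f(x_0)\bigr)p_{n,i}(x_0)\Bigr|\le\frac{2M}{\delta^2}\sum_{i=0}^n\Bigl(\frac{i}{n}-x_0\Bigr)^{2}p_{n,i}(x_0)=\frac{2M}{\delta^2}\cdot\frac{x_0(1-x_0)}{n}\le\frac{M}{2\delta^2 n}.
\]
Choosing $n$ large enough that the right-hand side is below $\varepsilon/2$ yields $|B_n(f;x_0)-f(x_0)|<\varepsilon$, which proves pointwise convergence at every continuity point of $f$.

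For the uniform statement when $f\in C[0,1]$, the only change is to invoke uniform continuity on the compact interval $[0,1]$: one $\delta$ then serves for all $x$ simultaneously, and since $x(1-x)\le 1/4$ the tail bound $M/(2\delta^2 n)$ is already independent of $x$, so the threshold on $n$ is uniform and $\sup_{x\in[0,1]}|B_n(f;x)-f(x)|\to 0$. I do not expect a serious obstacle here: the argument is genuinely elementary, and the only step that needs a little care is the derivation of the three moment identities --- in particular the second-moment one --- which is cleanest either probabilistically (it is the variance of a scaled binomial) or, purely algebraically, by differentiating $\sum_i\binom{n}{i}y^i(1-x)^{n-i}=(y+1-x)^n$ twice in $y$ and setting $y=x$; everything after that is routine estimation.
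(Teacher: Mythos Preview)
Your argument is correct and is precisely the classical Bernstein proof: the three moment identities for the basis functions $p_{n,i}(x)$, the near/far splitting of the index set, Chebyshev-type control of the far part via the variance identity, and uniform continuity for the uniform statement. Nothing is missing.

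As for comparison with the paper: there is nothing to compare against. The paper does not prove Theorem~\ref{thm1}; it quotes it as a known result with references to Davis~\cite{Dav} and Phillips~\cite{phi} and moves on. Your write-up is essentially the proof one finds in those references (and in virtually every textbook treatment), so in that sense you have reconstructed exactly what the paper is citing. The only cosmetic remark is that your derivation of the second-moment identity via differentiating $(y+1-x)^n$ twice in $y$ and setting $y=x$ is fine, but you should check the bookkeeping carefully if you actually carry it out; the probabilistic reading as $\mathrm{Var}(S_n/n)$ for $S_n\sim\mathrm{Binomial}(n,x)$ is the cleanest route and avoids any risk of an algebra slip.
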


\begin{corollary}
If $f\in C[0,1]$ and $\varepsilon>0$, then
one has, for all sufficiently large $n$, that
$|f(x)-B_n(f;x)|\leq\varepsilon$ for all $0\leq x\leq 1$.
\end{corollary}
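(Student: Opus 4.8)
The plan is to read off the statement directly from the uniform convergence assertion of Bernstein's Theorem~\ref{thm1}, since the corollary is merely a reformulation of that assertion in $\varepsilon$--$N$ language. First I would recall what uniform convergence means: to say that $B_n(f;x)\to f(x)$ uniformly on $[0,1]$ is to say that the sequence of nonnegative real numbers $M_n:=\sup_{0\leq x\leq 1}|f(x)-B_n(f;x)|$ tends to $0$ as $n\to\infty$. Since $f\in C[0,1]$, Theorem~\ref{thm1} guarantees precisely this uniform convergence, hence $M_n\to 0$.

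Next, given $\varepsilon>0$, I would invoke the definition of convergence of the real sequence $(M_n)$ to $0$: there exists $N\in\mathbb{N}$ such that $M_n\leq\varepsilon$ for every $n\geq N$. By the definition of $M_n$ as a supremum, this yields $|f(x)-B_n(f;x)|\leq M_n\leq\varepsilon$ simultaneously for all $x\in[0,1]$ and all $n\geq N$, which is exactly the claimed conclusion, "for all sufficiently large $n$".

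There is no genuine obstacle here: the corollary is an immediate unpacking of the last sentence of Theorem~\ref{thm1}, so the main ``difficulty'' is only to state the $\varepsilon$--$N$ translation cleanly. If one preferred a self-contained argument that does not cite the uniform part of Theorem~\ref{thm1}, one could instead estimate $|f(x)-B_n(f;x)|$ directly via the modulus of continuity $\omega(f;\delta)$ together with the identity $\sum_{i=0}^{n}\left(\frac{i}{n}-x\right)^{2}\binom{n}{i}x^{i}(1-x)^{n-i}=\frac{x(1-x)}{n}\leq\frac{1}{4n}$, splitting the Bernstein sum into indices with $|i/n-x|<\delta$ and with $|i/n-x|\geq\delta$, applying a Chebyshev-type bound on the latter, then first choosing $\delta$ small and afterwards $n$ large so the total bound is at most $\varepsilon$ uniformly in $x$. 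That argument is, however, nothing other than the proof of Theorem~\ref{thm1} itself, so in the present context the short derivation above is the natural route.
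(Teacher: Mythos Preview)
Your proposal is correct and matches the paper's treatment: the paper states this corollary without proof, as an immediate consequence of the uniform convergence clause in Theorem~\ref{thm1}, which is exactly what you do by unpacking uniform convergence in $\varepsilon$--$N$ language. The alternative direct estimate you sketch is unnecessary here but is indeed the standard proof of Theorem~\ref{thm1} itself.
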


In case of functions that are twice differentiable,
an asymptotic error term for the Bernstein polynomials
is easily obtained.

\begin{theorem}[Voronovskaya's theorem \cite{Dav,phi}]
\label{thm11}
Let $f$ be bounded on $[0, 1]$. For any $x\in[0, 1]$
at which $f''(x)$ exists,
$\lim_{n\rightarrow\infty}[2n(B_n(f;x)-f(x))]=x(1-x)f''(x)$.
\end{theorem}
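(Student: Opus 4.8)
The plan is to expand $B_n(f;x)-f(x)$ using a second-order Taylor expansion of $f$ around the fixed point $x$ where $f''(x)$ exists, and then to evaluate the resulting moment sums of the Bernstein basis. Writing $B_n(f;x)=\sum_{i=0}^n p_{n,i}(x) f(i/n)$ with $p_{n,i}(x)=\binom{n}{i}x^i(1-x)^{n-i}$, I would insert
\[
f\!\left(\frac{i}{n}\right)=f(x)+f'(x)\!\left(\frac{i}{n}-x\right)+\frac{1}{2}f''(x)\!\left(\frac{i}{n}-x\right)^2+r\!\left(\frac{i}{n}-x\right)\!\left(\frac{i}{n}-x\right)^2,
\]
where $r(t)\to 0$ as $t\to 0$ (this is exactly what the existence of $f''(x)$, via Taylor's theorem with Peano remainder, provides). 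Multiplying by $p_{n,i}(x)$ and summing over $i$, the first three terms are handled by the classical Bernstein moment identities:
\[
\sum_{i=0}^n p_{n,i}(x)=1,\qquad \sum_{i=0}^n p_{n,i}(x)\!\left(\frac{i}{n}-x\right)=0,\qquad \sum_{i=0}^n p_{n,i}(x)\!\left(\frac{i}{n}-x\right)^2=\frac{x(1-x)}{n}.
\]
Hence $2n\bigl(B_n(f;x)-f(x)\bigr)=x(1-x)f''(x)+2n\,R_n(x)$, where $R_n(x)=\sum_{i=0}^n p_{n,i}(x)\,r(i/n-x)(i/n-x)^2$ is the remainder sum, and everything reduces to showing $n R_n(x)\to 0$.

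For the remainder, I would use the standard $\varepsilon$--$\delta$ splitting of the index set. Fix $\varepsilon>0$ and choose $\delta>0$ so that $|r(t)|<\varepsilon$ whenever $|t|<\delta$. Split the sum into indices with $|i/n-x|<\delta$ and those with $|i/n-x|\ge\delta$. On the ``near'' part, $|r|<\varepsilon$ gives a bound $\varepsilon\sum_i p_{n,i}(x)(i/n-x)^2 = \varepsilon\,x(1-x)/n$, so $2n$ times it is at most $2\varepsilon\,x(1-x)\le \varepsilon/2$. On the ``far'' part, $r$ is bounded (since $f$ is bounded near $x$, indeed bounded on $[0,1]$, and $f''(x)$ exists, $r$ is bounded on a neighbourhood; on the complement one uses boundedness of $f$ directly), say $|r(i/n-x)(i/n-x)^2|\le M$, and the contribution is at most $M\sum_{|i/n-x|\ge\delta}p_{n,i}(x)$. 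By Chebyshev's inequality applied to the binomial distribution, $\sum_{|i/n-x|\ge\delta}p_{n,i}(x)\le \frac{x(1-x)}{n\delta^2}$, so $2n$ times the far part is $O(1/\delta^2)\cdot \frac{1}{1}$... more precisely $\le 2M x(1-x)/\delta^2$, which does not yet vanish — so I must be more careful: on the far part I should not bound $r$ crudely but rather bound the whole quantity $2n|r(i/n-x)|(i/n-x)^2 p_{n,i}(x)$ and use that $(i/n-x)^2$ is bounded by $1$ while the probability mass decays; a cleaner route is to bound $2n R_n^{\mathrm{far}}(x)$ by $2n\cdot\|f\text{-quadratic}\|_\infty\sum_{|i/n-x|\ge\delta}p_{n,i}(x)$ and then invoke a fourth-moment estimate $\sum p_{n,i}(x)(i/n-x)^4=O(1/n^2)$ together with $(i/n-x)^2\ge\delta^2$ on the far set to get $\sum_{\text{far}}p_{n,i}(x)(i/n-x)^2\le \frac{1}{\delta^2}\sum p_{n,i}(x)(i/n-x)^4=O(1/n^2)$, hence $2n$ times it is $O(1/n)\to 0$.

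The main obstacle is precisely this control of the remainder sum: the near/far decomposition must be arranged so that the factor of $2n$ is absorbed. The key realisation is that one needs not just the second moment $O(1/n)$ but the fourth central moment $O(1/n^2)$ of the Bernstein basis on the far set, which beats the factor $n$; alternatively one phrases it probabilistically via the binomial distribution's concentration. Once $n R_n(x)\to 0$ is established, the theorem follows immediately. I would finish by noting that this argument is entirely pointwise in $x$ and requires only boundedness of $f$ plus twice-differentiability at the single point $x$, matching the hypotheses of Theorem~\ref{thm11}.
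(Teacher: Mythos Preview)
The paper does not supply its own proof of Theorem~\ref{thm11}; Voronovskaya's theorem is quoted as a classical result with references to Davis~\cite{Dav} and Phillips~\cite{phi}. There is therefore no in-paper argument to compare against, only the textbook proofs in those sources.

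On its own merits, your proposal is the standard textbook argument and is essentially correct: Taylor expansion with Peano remainder at $x$, the three Bernstein moment identities, and a near/far splitting of the remainder sum. Your initial stumble on the far part is real --- a bare Chebyshev bound $\sum_{|i/n-x|\ge\delta}p_{n,i}(x)\le x(1-x)/(n\delta^2)$ only gives an $O(1)$ contribution after multiplying by $2n$ --- but your self-correction via the fourth central moment
\[
\sum_{i=0}^{n} p_{n,i}(x)\left(\frac{i}{n}-x\right)^{4}=O\!\left(\frac{1}{n^{2}}\right)
\]
is exactly what is needed and is the device used in the cited references. The cleanest bookkeeping is the one you arrive at last: on the far set bound the full remainder term $|r(i/n-x)|(i/n-x)^2=|f(i/n)-f(x)-f'(x)(i/n-x)-\tfrac12 f''(x)(i/n-x)^2|\le C$ (using only boundedness of $f$ and $|i/n-x|\le 1$), then use $\sum_{\text{far}}p_{n,i}(x)\le \delta^{-4}\sum_i p_{n,i}(x)(i/n-x)^4=O(n^{-2})$, so that $2n$ times the far contribution is $O(n^{-1})$. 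With that tidied, the proof is complete and matches the classical one.
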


In contrast with other methods of approximation, the Bernstein
polynomials yield smooth approximations. If the approximated
function is differentiable, not only do we have
$B_n(f;x)\rightarrow f(x)$ but also $B'_n(f;x)\rightarrow f'(x)$.
A corresponding statement is true for higher derivatives. Therefore,
Bernstein polynomials  provide simultaneous approximation
of the function and its derivatives.

\begin{theorem}[\cite{Dav,phi}]
\label{thm2}
If $f\in C^p[0,1]$, then
$\lim_{n\rightarrow\infty}B^{(p)}_n(f;x)=f^{(p)}(x)$
uniformly on $[0,1]$.
\end{theorem}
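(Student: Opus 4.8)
The plan is to reduce the statement to Bernstein's Theorem~\ref{thm1} applied to $f^{(p)}$, by rewriting $B_n^{(p)}(f;x)$ as a (slightly perturbed) Bernstein polynomial of degree $n-p$ built from $f^{(p)}$. The first step is to establish the classical differentiation formula for the Bernstein basis: differentiating \eqref{eq:bp} once and reorganizing the sum gives $\frac{d}{dx}B_n(f;x)=n\sum_{i=0}^{n-1}\bigl[f(\tfrac{i+1}{n})-f(\tfrac{i}{n})\bigr]\binom{n-1}{i}x^i(1-x)^{n-1-i}$, and iterating this $p$ times (for $n\geq p$) yields
\begin{equation*}
B_n^{(p)}(f;x)=\frac{n!}{(n-p)!}\sum_{i=0}^{n-p}\bigl(\Delta_{1/n}^{p}f\bigr)\!\left(\tfrac{i}{n}\right)\binom{n-p}{i}x^i(1-x)^{n-p-i},
\end{equation*}
where $\Delta_h^{p}$ denotes the $p$-th forward difference with step $h$, namely $\Delta_h^{p}g(t)=\sum_{j=0}^{p}(-1)^{p-j}\binom{p}{j}g(t+jh)$. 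This identity is proved by induction on $p$ using the telescoping and index-shift relations for the monomials $x^i(1-x)^{m-i}$.

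Next I would invoke the mean value theorem for finite differences. Since $f\in C^{p}[0,1]$ and, for $0\leq i\leq n-p$, the interval $[\tfrac{i}{n},\tfrac{i+p}{n}]$ is contained in $[0,1]$, there exists $\xi_{i,n}\in(\tfrac{i}{n},\tfrac{i+p}{n})$ with $n^{p}\bigl(\Delta_{1/n}^{p}f\bigr)(\tfrac{i}{n})=f^{(p)}(\xi_{i,n})$. Substituting, and writing $c_{n,p}=\tfrac{n!}{(n-p)!\,n^{p}}=\prod_{k=0}^{p-1}\bigl(1-\tfrac{k}{n}\bigr)\to 1$, we obtain
\begin{equation*}
B_n^{(p)}(f;x)=c_{n,p}\sum_{i=0}^{n-p}f^{(p)}(\xi_{i,n})\binom{n-p}{i}x^i(1-x)^{n-p-i}.
\end{equation*}
This sum differs from $B_{n-p}(f^{(p)};x)$ only in that $\xi_{i,n}$ replaces the node $\tfrac{i}{n-p}$. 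Because $|\xi_{i,n}-\tfrac{i}{n-p}|\leq\tfrac{p}{n}+i\bigl(\tfrac{1}{n-p}-\tfrac{1}{n}\bigr)\leq\tfrac{2p}{n}\to 0$ uniformly in $i$, and $f^{(p)}$ is uniformly continuous on the compact interval $[0,1]$, the triangle inequality together with $\sum_{i=0}^{n-p}\binom{n-p}{i}x^i(1-x)^{n-p-i}=1$ shows that this sum and $B_{n-p}(f^{(p)};x)$ differ by a quantity that tends to $0$ uniformly in $x$.

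Finally I would assemble the estimate $B_n^{(p)}(f;x)-f^{(p)}(x)=c_{n,p}\bigl[\,\text{sum}-B_{n-p}(f^{(p)};x)\bigr]+(c_{n,p}-1)B_{n-p}(f^{(p)};x)+\bigl[B_{n-p}(f^{(p)};x)-f^{(p)}(x)\bigr]$: the first bracket is $o(1)$ uniformly with $c_{n,p}$ bounded, the middle term is controlled by $|c_{n,p}-1|\,\|f^{(p)}\|_{\infty}\to 0$, and the last bracket vanishes uniformly by Theorem~\ref{thm1} applied to $f^{(p)}\in C[0,1]$. Hence $B_n^{(p)}(f;x)\to f^{(p)}(x)$ uniformly on $[0,1]$. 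The main obstacle is the bookkeeping behind the two preparatory facts — the exact differentiation formula for $B_n^{(p)}$ and the finite-difference mean value theorem with intermediate points $\xi_{i,n}$ placed safely inside $[0,1]$ near the relevant endpoints; once these are in place, the remainder is a routine uniform-continuity and limit argument.
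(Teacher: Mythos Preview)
Your argument is the classical textbook proof and is correct: the iterated differentiation formula for $B_n$, the mean-value representation $n^{p}\Delta_{1/n}^{p}f(\tfrac{i}{n})=f^{(p)}(\xi_{i,n})$, and the three-term splitting at the end are all standard and sound; the bound $|\xi_{i,n}-\tfrac{i}{n-p}|\le 2p/n$ is easily justified via the triangle inequality since $\xi_{i,n}\in(\tfrac{i}{n},\tfrac{i+p}{n})$ and $0\le \tfrac{i}{n-p}-\tfrac{i}{n}=\tfrac{ip}{n(n-p)}\le \tfrac{p}{n}$ for $0\le i\le n-p$.

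There is nothing to compare against, however: the paper does not supply a proof of Theorem~\ref{thm2}. It is stated as a known result with citations to Davis~\cite{Dav} and Phillips~\cite{phi}, and is then used as a black box (notably in the proof of Theorem~\ref{MThCa}). Your write-up is essentially the proof one finds in those references, so in that sense it aligns with what the paper implicitly relies on.
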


\begin{remark}
All results formulated above for $[0,1]$ are easily transformed to $[a,b]$
by means of the linear transformation
$y=\frac{x-a}{b-a}$ that converts $[a,b]$ into $[0,1]$.
\end{remark}

In Section~\ref{sec:3}, we are going to develop the result presented
in Theorem~\ref{thm2} for fractional derivatives.
For an analytic function $f$ over the
interval $[a,b]$, the left and right sided fractional integrals in the
Riemann--Liouville sense, ${}_{a}\textrm{I}_{x}^{\alpha}f(x)$
and ${}_{x}\textrm{I}_{b}^{\alpha}f(x)$, respectively, are defined by
\begin{equation*}
_{a}\textrm{I}_{x}^{\alpha}f(x):=\frac{1}{\Gamma(\alpha)}
\int_{a}^{x}\frac{f(\tau)}{(x-\tau)^{1-\alpha}}d\tau,
\quad x\in[a,b],
\end{equation*}
and
\begin{equation*}
_{x}\textrm{I}_{b}^{\alpha}f(x):=\frac{1}{\Gamma(\alpha)}
\int_{x}^{b}\frac{f(\tau)}{(\tau-x)^{1-\alpha}}d\tau,
\quad x\in[a,b],
\end{equation*}
where $\alpha>0$ is a real number. For $m-1\leq\alpha< m$,
the left and right sided Riemann--Liouville
fractional derivatives are defined by
\begin{equation*}
_{a}\textrm{D}_{x}^{\alpha}f(x):=\frac{1}{\Gamma(m-\alpha)}\frac{d^m}{dx^m}
\int_{a}^{x}\frac{f(\tau)}{(x-\tau)^{\alpha+1-m}}d\tau,
\quad x\in[a,b],
\end{equation*}
and
\begin{equation*}
_{x}\textrm{D}_{b}^{\alpha}f(x):=\frac{(-1)^m}{\Gamma(m-\alpha)}\frac{d^m}{dx^m}
\int_{x}^{b}\frac{f(\tau)}{(\tau-x)^{\alpha+1-m}}d\tau,
\quad x\in[a,b],
\end{equation*}
respectively. The left and right sided Caputo fractional derivatives are defined by
\begin{equation*}
{}^{C}_{a}\textrm{D}_{x}^{\alpha}f(x):=\frac{1}{\Gamma(m-\alpha)}
\int_{a}^{x}\frac{f^{(m)}(\tau)}{(x-\tau)^{\alpha+1-m}}d\tau,
\quad x\in[a,b],
\end{equation*}
and
\begin{equation*}
{}^{C}_{x}\textrm{D}_{b}^{\alpha}f(x):=\frac{(-1)^m}{\Gamma(m-\alpha)}
\int_{x}^{b}\frac{f^{(n)}(\tau)}{(\tau-x)^{\alpha+1-m}}d\tau,
\quad x\in[a,b],
\end{equation*}
respectively, where $m$ is the integer such that
$m-1\leq\alpha< m$. The next theorem gives a relation
between Caputo and Riemann--Liouville derivatives.

\begin{theorem}[\cite{Pod:99:Fra(b)}]
\label{Thm.1}
If $f\in C^m[0,1]$ and $\alpha \in [m-1,m)$, then
\begin{gather*}
{}^{C}_{0}D_{x}^{\alpha}f(x)={}_{0}D_{x}^{\alpha}f(x)
-\sum_{k=0}^{m-1}\frac{f^{(k)}(0)(x)^{k-\alpha}}{\Gamma(k+1-\alpha)},\\
{}^{C}_{x}D_{1}^{\alpha}f(x)={}_{x}D_{1}^{\alpha}f(x)
-\sum_{k=0}^{m-1}\frac{f^{(k)}(1)(1-x)^{k-\alpha}}{\Gamma(k+1-\alpha)}.
\end{gather*}
\end{theorem}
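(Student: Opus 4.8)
The plan is to reduce the general identity to the special case of a function that vanishes, together with its first $m-1$ derivatives, at the left endpoint. Concretely, I would write $f = T + g$, where
\[
T(x) := \sum_{k=0}^{m-1}\frac{f^{(k)}(0)}{k!}\,x^{k}
\]
is the degree-$(m-1)$ Taylor polynomial of $f$ at $0$ and $g := f - T$. Since $f\in C^{m}[0,1]$, the remainder $g$ lies in $C^{m}[0,1]$, satisfies $g^{(k)}(0)=0$ for $k=0,\ldots,m-1$, and has $g^{(m)}=f^{(m)}$. Both ${}_{0}D_{x}^{\alpha}$ and ${}^{C}_{0}D_{x}^{\alpha}$ are linear, so it is enough to handle $T$ and $g$ separately.

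For the polynomial part I would invoke the power rule ${}_{0}D_{x}^{\alpha}x^{k}=\frac{\Gamma(k+1)}{\Gamma(k+1-\alpha)}\,x^{k-\alpha}$, valid for every integer $k\geq 0$ (note that for $\alpha\in(m-1,m)$ and $0\leq k\leq m-1$ the exponent $k-\alpha$ is negative but never a negative integer, so no term degenerates). This gives
\[
{}_{0}D_{x}^{\alpha}T(x)=\sum_{k=0}^{m-1}\frac{f^{(k)}(0)}{k!}\cdot\frac{k!}{\Gamma(k+1-\alpha)}\,x^{k-\alpha}
=\sum_{k=0}^{m-1}\frac{f^{(k)}(0)\,x^{k-\alpha}}{\Gamma(k+1-\alpha)},
\]
whereas ${}^{C}_{0}D_{x}^{\alpha}T\equiv 0$, because $T^{(m)}\equiv 0$ and the Caputo derivative is just ${}_{0}\textrm{I}_{x}^{m-\alpha}$ applied to the $m$-th derivative.

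The crux is to show that for the remainder one has ${}_{0}D_{x}^{\alpha}g={}^{C}_{0}D_{x}^{\alpha}g$. Because $g$ and its first $m-1$ derivatives vanish at $0$, iterated integration gives $g={}_{0}\textrm{I}_{x}^{m}g^{(m)}$. Combining this with the semigroup property ${}_{0}\textrm{I}_{x}^{m-\alpha}\,{}_{0}\textrm{I}_{x}^{m}={}_{0}\textrm{I}_{x}^{2m-\alpha}$ and then applying the identity $\frac{d^{m}}{dx^{m}}\,{}_{0}\textrm{I}_{x}^{m+\beta}h={}_{0}\textrm{I}_{x}^{\beta}h$ (valid for continuous $h$ and $\beta=m-\alpha\geq 0$) yields
\[
{}_{0}D_{x}^{\alpha}g(x)=\frac{d^{m}}{dx^{m}}\,{}_{0}\textrm{I}_{x}^{m-\alpha}g(x)={}_{0}\textrm{I}_{x}^{m-\alpha}g^{(m)}(x)={}^{C}_{0}D_{x}^{\alpha}g(x).
\]
Adding the two contributions and using ${}^{C}_{0}D_{x}^{\alpha}f={}^{C}_{0}D_{x}^{\alpha}T+{}^{C}_{0}D_{x}^{\alpha}g={}^{C}_{0}D_{x}^{\alpha}g$ delivers the first formula; the second, right-sided identity follows verbatim by working at the endpoint $x=1$, or by the reflection $x\mapsto 1-x$.

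I expect the main obstacle to be this third step: rigorously justifying the interchange of the $m$-fold ordinary differentiation with the fractional integration operator, i.e. checking that all the boundary terms generated by repeated integration by parts cancel; this is exactly the place where the hypotheses $f\in C^{m}$ and $g^{(k)}(0)=0$ are used. Verifying the power rule and the composition identities over the relevant parameter window $\alpha\in[m-1,m)$, so that $m-\alpha\in(0,1]$, is otherwise a routine matter.
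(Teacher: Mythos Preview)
The paper does not actually prove this theorem: it is stated with a citation to Podlubny's monograph and used as a black box, so there is no ``paper's own proof'' to compare against. Your Taylor-decomposition argument---splitting $f$ into its degree-$(m-1)$ Taylor polynomial $T$ at $0$ and a remainder $g$ with $g^{(k)}(0)=0$, computing ${}_{0}D_{x}^{\alpha}T$ via the power rule, noting ${}^{C}_{0}D_{x}^{\alpha}T=0$, and showing that the Riemann--Liouville and Caputo derivatives agree on $g$---is exactly the standard proof one finds in the cited reference, and it is correct. The only small imprecision is your parenthetical remark that $k-\alpha$ is ``never a negative integer'': the hypothesis allows $\alpha=m-1$, in which case $k+1-\alpha$ can be a nonpositive integer for $k<m-1$, but then $1/\Gamma(k+1-\alpha)=0$ and those terms drop out, consistent with ${}_{0}D_{x}^{m-1}x^{k}=0$ for $k<m-1$; so the formula still holds.
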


For more on fractional calculus see, e.g., \cite{KilSriTru:06:The,Pod:99:Fra(b)}.


\section{Fractional derivatives and integrals of the Bernstein polynomials}
\label{sec:3}

We begin by computing the left and right sided
fractional derivatives and integrals of the Bernstein
polynomials with respect to a function $f$. Then, we prove
that Theorem~\ref{thm2} can be formulated for
non-integer derivatives and integrals.


\subsection{Left and right sided Caputo fractional derivatives of $\mathbf{B_n(f;x)}$}

The Bernstein polynomials \eqref{eq:bp} can be written in the form
\[
B_n(f;x)=\sum_{i=0}^{n}\sum_{j=0}^{n-i}\binom{n}{i}\binom{n-i}{j}(-1)^jf\left(\frac
i n\right)x^{i+j}.
\]
The left sided Riemann--Liouville fractional derivative of $B_n(f;x)$ on $[0,1]$ is
\begin{equation}
\label{pre1}
\begin{split}
_{0}D_{x}^{\alpha}B_n(f;x)
&= {_{0}D_{x}^{\alpha}} \sum_{i=0}^{n}\sum_{j=0}^{n-i}\binom{n}{i}
\binom{n-i}{j}(-1)^jf\left(\frac i n\right)x^{i+j}\\
&= \sum_{i=0}^{n}\sum_{j=0}^{n-i}\binom{n}{i}\binom{n-i}{j}f\left(\frac i n\right)
\frac{(-1)^j\Gamma(i+j+1)}{\Gamma(i+j+1-\alpha)}x^{i+j-\alpha}.
\end{split}
\end{equation}
Another representation of the Bernstein polynomials with respect
to function $f$ is obtained when we use the binomial
expansion of $x^i=\left(1-(1-x)\right)^i$:
\begin{equation}
\label{eq:bp:f3}
B_n(f;x)=\sum_{i=0}^{n}\sum_{j=0}^{i}\binom{n}{i}\binom{i}{j}(-1)^jf\left(\frac
i n\right)(1-x)^{n-i+j}.
\end{equation}
Using \eqref{eq:bp:f3}, we get the right sided Riemann--Liouville fractional derivative
of $B_n(f;x)$ over the interval $[0,1]$ as
\begin{equation}
\label{pre2}
\begin{split}
_{x}D_{1}^{\alpha}B_n(f;x)
&= {_{x}D_{1}^{\alpha}}\sum_{i=0}^{n}\sum_{j=0}^{i}
\binom{n}{i}\binom{i}{j}(-1)^jf\left(\frac i n\right)(1-x)^{n-i+j}\\
&= \sum_{i=0}^{n}\sum_{j=0}^{i}\binom{n}{i}\binom{i}{j}
f\left(\frac i n\right)\frac{(-1)^j\Gamma(n-i+j+1)}{\Gamma(n-i+j+1-\alpha)}(1-x)^{n-i+j-\alpha}.
\end{split}
\end{equation}
The left and right sided Caputo fractional derivatives
of the Bernstein polynomials with respect
to a function $f$ on $[0,1]$ are easily obtained
from Theorem~\ref{Thm.1}.

The next theorem shows that
${}_{0}^CD_{x}^{\alpha}B_{n}(f;x)\rightarrow
{}_{0}^CD_{x}^{\alpha}f(x)$ as $n\rightarrow\infty$ uniformly on
$[0,1]$. The same result holds for the right derivative:
${}_{x}^CD_{1}^{\alpha}B_{n}(f;x)\rightarrow{}_{x}^CD_{1}^{\alpha}f(x)$
as $n\rightarrow\infty$ uniformly on $[0,1]$. Along the text,
we use $\| \cdot \|$ to denote the uniform norm over the interval $[0,1]$,
that is, $\|g\|=\max_{0\leq x\leq1}|g(x)|$ for $g\in C[0,1]$.

\begin{theorem}
\label{MThCa}
Let $\alpha$ be a nonnegative real number and
$m \in \mathbb{N}$ be such that $m-1\leq\alpha< m$.
If $f\in C^m[0,1]$ and $\varepsilon>0$, then
$\|{}_{0}^CD_{x}^{\alpha}f-{}_{0}^CD_{x}^{\alpha}B_{n}(f)\|<\varepsilon$.
\end{theorem}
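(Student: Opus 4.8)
The plan is to exploit the elementary fact that the Caputo derivative is nothing but a Riemann--Liouville fractional \emph{integral} of the $m$-th classical derivative. Indeed, writing $\beta := m-\alpha \in (0,1]$, the definition of ${}_{0}^{C}D_{x}^{\alpha}$ reads
\[
{}_{0}^{C}D_{x}^{\alpha}g(x)=\frac{1}{\Gamma(\beta)}\int_{0}^{x}\frac{g^{(m)}(\tau)}{(x-\tau)^{1-\beta}}\,d\tau={}_{0}\textrm{I}_{x}^{\beta}\big[g^{(m)}\big](x),
\]
which applies both to $g=f$ (since $f\in C^{m}[0,1]$) and to $g=B_{n}(f;\cdot)$ (a polynomial, hence $C^{\infty}$). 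By linearity,
\[
{}_{0}^{C}D_{x}^{\alpha}f-{}_{0}^{C}D_{x}^{\alpha}B_{n}(f)={}_{0}\textrm{I}_{x}^{\beta}\big[f^{(m)}-B_{n}^{(m)}(f)\big].
\]

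Next I would record the obvious uniform bound for the fractional-integral operator: for any $\beta>0$ and $g\in C[0,1]$,
\[
\big\|{}_{0}\textrm{I}_{x}^{\beta}g\big\|\le \frac{\|g\|}{\Gamma(\beta)}\,\max_{0\le x\le1}\int_{0}^{x}(x-\tau)^{\beta-1}\,d\tau=\frac{\|g\|}{\Gamma(\beta)}\cdot\frac{1}{\beta}=\frac{\|g\|}{\Gamma(\beta+1)},
\]
obtained simply by pulling $\|g\|$ out of the integral and computing $\int_{0}^{x}(x-\tau)^{\beta-1}\,d\tau=x^{\beta}/\beta\le 1/\beta$ on $[0,1]$. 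This is the only computation in the argument.

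Finally, I would invoke Theorem~\ref{thm2}: since $f\in C^{m}[0,1]$, we have $B_{n}^{(m)}(f)\to f^{(m)}$ uniformly on $[0,1]$, that is, $\|f^{(m)}-B_{n}^{(m)}(f)\|\to 0$. Given $\varepsilon>0$, choose $n$ so large that $\|f^{(m)}-B_{n}^{(m)}(f)\|<\varepsilon\,\Gamma(m-\alpha+1)$; combining this with the two displays above gives
\[
\big\|{}_{0}^{C}D_{x}^{\alpha}f-{}_{0}^{C}D_{x}^{\alpha}B_{n}(f)\big\|\le\frac{\|f^{(m)}-B_{n}^{(m)}(f)\|}{\Gamma(m-\alpha+1)}<\varepsilon,
\]
which is the claim.

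There is essentially no hard step here: the whole point is to recognize the factorization of ${}_{0}^{C}D_{x}^{\alpha}$ through the bounded operator ${}_{0}\textrm{I}_{x}^{\,m-\alpha}$ on $C[0,1]$, after which the result is a one-line consequence of the classical simultaneous-approximation Theorem~\ref{thm2}. The only minor points to watch are the boundary case $\alpha=m-1$ (then $\beta=1$, the ``fractional'' integral degenerates to ordinary integration, and the estimate still holds with $\Gamma(2)=1$) and the fact that the stated strict inequality is to be read as holding for all sufficiently large $n$. The companion statement for the right-sided Caputo derivative ${}_{x}^{C}D_{1}^{\alpha}$ follows verbatim after the substitution $x\mapsto 1-x$, using the corresponding right-sided pieces of Theorem~\ref{thm2}.
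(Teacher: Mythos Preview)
Your proof is correct and is essentially the same as the paper's: both use the definition of the Caputo derivative as the Riemann--Liouville integral of order $m-\alpha$ applied to $f^{(m)}-B_n^{(m)}(f)$, bound this integral by $\|f^{(m)}-B_n^{(m)}(f)\|\cdot x^{m-\alpha}/\Gamma(m-\alpha+1)$, and invoke Theorem~\ref{thm2}. Your version is slightly cleaner in that you explicitly choose $n$ so that $\|f^{(m)}-B_n^{(m)}(f)\|<\varepsilon\,\Gamma(m-\alpha+1)$, whereas the paper silently uses two different symbols $\epsilon$ and $\varepsilon$ for the same purpose.
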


\begin{proof}
We wish to show that, given $\varepsilon>0$ and $f \in C^m[0,1]$,
there exists an integer $n>m$ such that
$\|{}_{0}^CD_{x}^{\alpha}f - {}_{0}^CD_{x}^{\alpha}B_{n}(f)\|<\varepsilon$,
where $B_{n}(f)$ is the Bernstein polynomial of degree $n$ with respect to
function $f$. Using Theorem~\ref{thm2}, we have
$\|f^{(m)}-B^{(m)}_n(f)\|<\epsilon$. Thus,
\begin{equation*}
\begin{split}
\bigl| {_{0}^{C}D_{x}^{\alpha}}f(x)
&-{}_{0}^CD_{x}^{\alpha}B_{n}(f;x)\bigr|\\
&= \left|\frac{1}{\Gamma(m-\alpha)}\left(
\int_0^x(x-t)^{m-\alpha-1}\frac{d^m}{dt^m}(f(t)-B_n(f;t))\,dt\right)\right|\\
&\leq\frac{1}{\Gamma(m-\alpha)}\left(
\int_0^x(x-t)^{m-\alpha-1}\left|f^{(m)}(t)-B^{(m)}_n(f;t)\right|\,dt\right)\\
&\leq\frac{1}{\Gamma(m-\alpha)}\left(
\int_0^x(x-t)^{m-\alpha-1}\|f^{(m)}-B^{(m)}_n(f)\|\,dt\right)\\
&=\epsilon\ \frac{x^{m-\alpha}}{\Gamma(m-\alpha+1)}<\varepsilon.
\end{split}
\end{equation*}
\end{proof}

\begin{remark}
Theorem~\ref{MThCa} is a generalization of Theorem~\ref{thm2}:
if $\alpha \in\Bbb{N}$, then Theorem~\ref{MThCa} reduces to Theorem~\ref{thm2}.
\end{remark}

The next theorem gives an asymptotic error term for the
Caputo fractional derivatives of the Bernstein polynomials with
respect to functions $f\in AC^{m+2}[0,1]$, where
$AC^{m+2}[0,1]$ denotes the space of real functions $f$ that
have continuous derivatives up to order $m+1$ with
$f^{(m+1)}$ absolutely continuous on $[0, 1]$. The
theorem shows that if $\alpha \in [m-1, m)$, then
$|_{0}^CD_{x}^{\alpha}B_{n}(f;x)-{}_{0}^{C}D_{x}^{\alpha}f(x)|
=\mathcal{O}(h),\ h=\frac{1}{n}$.

\begin{theorem}
\label{MThm}
Let $m-1\leq\alpha< m$ and $f\in AC^{m+2}[0, 1]$.
Then, for any $x\in[0, 1]$,
\begin{equation}
\label{order}
\lim_{n\rightarrow\infty}\left[2n(_{0}^CD_{x}^{\alpha}B_{n}(f;x)
-{}_{0}^{C}D_{x}^{\alpha}f(x))\right]
={}_{0}^{C}D_{x}^{\alpha}(x(1-x)f''(x)).
\end{equation}
\end{theorem}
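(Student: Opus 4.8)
The plan is to reduce the statement to Voronovskaya's theorem (Theorem~\ref{thm11}) applied not to $f$ itself but to its $m$-th derivative. Recall that for $m-1\le\alpha<m$ the Caputo derivative is
\[
{}_{0}^{C}D_{x}^{\alpha}g(x)=\frac{1}{\Gamma(m-\alpha)}\int_0^x (x-t)^{m-\alpha-1}g^{(m)}(t)\,dt
={}_{0}I_{x}^{m-\alpha}\bigl(g^{(m)}\bigr)(x),
\]
so that ${}_{0}^{C}D_{x}^{\alpha}$ is the Riemann--Liouville integral operator ${}_{0}I_{x}^{m-\alpha}$ composed with $m$-fold differentiation. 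Hence
\[
2n\bigl({}_{0}^{C}D_{x}^{\alpha}B_n(f;x)-{}_{0}^{C}D_{x}^{\alpha}f(x)\bigr)
={}_{0}I_{x}^{m-\alpha}\Bigl(2n\bigl(B_n^{(m)}(f;t)-f^{(m)}(t)\bigr)\Bigr)(x).
\]
So the first step is to establish a Voronovskaya-type asymptotic for the $m$-th derivative: for $f\in AC^{m+2}[0,1]$,
\[
2n\bigl(B_n^{(m)}(f;t)-f^{(m)}(t)\bigr)\longrightarrow \frac{d^m}{dt^m}\bigl(t(1-t)f''(t)\bigr)
\]
as $n\to\infty$. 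This should follow by differentiating the Bernstein-polynomial identities $m$ times (using the classical formula $B_n^{(m)}(f;x)=\frac{n!}{(n-m)!}\sum \Delta^m$-type difference quotients, or simply by noting $B_n^{(m)}(f;x)$ is itself essentially a Bernstein polynomial of order $n-m$ built from the $m$-th differences of $f$), and then applying the standard Voronovskaya expansion together with the hypothesis that $f^{(m+1)}$ is absolutely continuous, which guarantees enough smoothness for the remainder in the Voronovskaya estimate to be controlled uniformly.

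Once that pointwise (indeed uniform) limit is in hand, the second step is to pass the limit through the operator ${}_{0}I_{x}^{m-\alpha}$. Since $m-\alpha\in(0,1]$, the kernel $(x-t)^{m-\alpha-1}$ is integrable on $[0,x]$, and $\|{}_{0}I_{x}^{m-\alpha}g\|\le \frac{\|g\|}{\Gamma(m-\alpha+1)}$ for $g\in C[0,1]$ — this is exactly the bound already used in the proof of Theorem~\ref{MThCa}. Therefore, writing $\phi_n(t):=2n\bigl(B_n^{(m)}(f;t)-f^{(m)}(t)\bigr)$ and $\phi(t):=\frac{d^m}{dt^m}(t(1-t)f''(t))$, we get
\[
\Bigl|\,2n\bigl({}_{0}^{C}D_{x}^{\alpha}B_n(f;x)-{}_{0}^{C}D_{x}^{\alpha}f(x)\bigr)-{}_{0}^{C}D_{x}^{\alpha}\bigl(x(1-x)f''(x)\bigr)\Bigr|
=\bigl|{}_{0}I_{x}^{m-\alpha}(\phi_n-\phi)(x)\bigr|\le \frac{\|\phi_n-\phi\|}{\Gamma(m-\alpha+1)},
\]
and the right-hand side tends to $0$ by Step~1. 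Note that here ${}_{0}^{C}D_{x}^{\alpha}\bigl(x(1-x)f''(x)\bigr)$ is well defined precisely because $f\in AC^{m+2}$ makes $t(1-t)f''(t)$ an element of $AC^m[0,1]$, so its $m$-th derivative is integrable and the Caputo integral makes sense; this matches the regularity bookkeeping in the hypothesis.

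The main obstacle I expect is Step~1: proving the Voronovskaya asymptotic for $B_n^{(m)}(f;\cdot)$ with a \emph{uniform} (not merely pointwise) rate, since the clean passage-through-the-integral argument above needs $\|\phi_n-\phi\|\to 0$ rather than just pointwise convergence. The cited Theorem~\ref{thm11} gives only a pointwise statement for $f''$, so one must either invoke a stronger (uniform) Voronovskaya theorem available for $f\in AC^{m+2}$, or argue more carefully — e.g. express $B_n^{(m)}(f;x)-f^{(m)}(x)$ via an exact integral remainder (Peano kernel) and estimate it uniformly in $x\in[0,1]$, paying attention to the endpoints where the factor $x(1-x)$ degenerates. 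If only pointwise convergence of $\phi_n$ is secured, one can still conclude \eqref{order} pointwise by combining dominated convergence (the $\phi_n$ are uniformly bounded on $[0,1]$ for $f\in AC^{m+2}$, since the second-order Bernstein error is uniformly $O(1/n)$) with the integrable kernel $(x-t)^{m-\alpha-1}$; this is in fact the more honest route given the hypotheses, and the endpoint degeneracy is then harmless because it is handled inside the fixed integrable weight.
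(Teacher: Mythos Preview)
Your approach is essentially the same as the paper's: both rest on Voronovskaya's theorem together with the structure of the Caputo operator, and the paper's proof is in fact only the one-line sketch ``the result follows from Theorem~\ref{thm11}, the properties of the left sided Caputo fractional derivative, and the fact that under assumptions on $f$ the right-hand side of \eqref{order} exists almost everywhere on $[0,1]$.'' Your write-up is considerably more careful than that: you make explicit the decomposition ${}_{0}^{C}D_{x}^{\alpha}={}_{0}I_{x}^{m-\alpha}\circ\frac{d^{m}}{dx^{m}}$, you isolate the need for a Voronovskaya-type asymptotic at the level of $B_n^{(m)}(f;\cdot)$, and you correctly flag the pointwise-versus-uniform issue and resolve it via dominated convergence using the uniform $O(1/n)$ bound available for $f\in AC^{m+2}$ --- all of which the paper leaves implicit.
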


\begin{proof}
The result follows from Theorem~\ref{thm11}, the properties
of the left sided Caputo fractional derivative,
and the fact that under assumptions on $f$ the right-hand side of \eqref{order}
exists almost everywhere on $[0,1]$ (see, e.g., \cite{KilSriTru:06:The}).
\end{proof}

Similar results to those of Theorem~\ref{MThCa} and Theorem~\ref{MThm}
hold for the right sided Caputo fractional derivatives:

\begin{theorem}
\label{MThCa1}
If $m-1\leq\alpha< m$,  $f\in C^m[0,1]$, and $\varepsilon>0$, then
$$
\|_{x}^CD_{1}^{\alpha}f-{}_{x}^CD_{1}^{\alpha}B_{n}(f)\|<\varepsilon.
$$
\end{theorem}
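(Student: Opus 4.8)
The plan is to mirror, verbatim, the argument used for Theorem~\ref{MThCa}, replacing the left sided Caputo derivative and its kernel integral by the right sided ones. First I would invoke Theorem~\ref{thm2}: given $\varepsilon>0$ and $f\in C^m[0,1]$, choose $\epsilon>0$ (to be fixed below) and then an integer $n>m$ such that $\|f^{(m)}-B_n^{(m)}(f)\|<\epsilon$. Next, writing out the definition of ${}^{C}_{x}D_{1}^{\alpha}$ and using that $\tfrac{d^m}{d\tau^m}B_n(f;\tau)=B_n^{(m)}(f;\tau)$, I would estimate, for every $x\in[0,1]$,
\[
\bigl|{}^{C}_{x}D_{1}^{\alpha}f(x)-{}^{C}_{x}D_{1}^{\alpha}B_n(f;x)\bigr|
\leq\frac{1}{\Gamma(m-\alpha)}\int_{x}^{1}(\tau-x)^{m-\alpha-1}\bigl|f^{(m)}(\tau)-B_n^{(m)}(f;\tau)\bigr|\,d\tau
\leq\epsilon\,\frac{(1-x)^{m-\alpha}}{\Gamma(m-\alpha+1)} ,
\]
where the last equality comes from $\int_x^1(\tau-x)^{m-\alpha-1}\,d\tau=(1-x)^{m-\alpha}/(m-\alpha)$. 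Since $0\leq(1-x)^{m-\alpha}\leq1$ on $[0,1]$ and $\Gamma(m-\alpha+1)$ is a fixed positive constant, taking $\epsilon\leq\varepsilon\,\min\{1,\Gamma(m-\alpha+1)\}$ at the outset gives $\|{}^{C}_{x}D_{1}^{\alpha}f-{}^{C}_{x}D_{1}^{\alpha}B_n(f)\|<\varepsilon$, as claimed.

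An alternative, perhaps cleaner, route is by reflection. Put $g(t):=f(1-t)$, so that $g\in C^m[0,1]$ whenever $f\in C^m[0,1]$. A direct substitution in \eqref{eq:bp} shows $B_n(g;x)=B_n(f;1-x)$, and a change of variable $\tau\mapsto1-\tau$ in the defining integrals shows that the right sided Caputo derivative of a function $h$ at $x$ equals the left sided Caputo derivative of $h(1-\cdot)$ evaluated at $1-x$. Combining these two identities, Theorem~\ref{MThCa1} becomes exactly Theorem~\ref{MThCa} applied to $g$, with $x$ replaced by $1-x$, and the uniform norm over $[0,1]$ being invariant under $x\mapsto1-x$.

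I do not expect a genuine obstacle here: the statement is the symmetric counterpart of Theorem~\ref{MThCa} and both proofs above are short. The only point requiring a little care is the same one that is tacitly handled in the proof of Theorem~\ref{MThCa}: the constant $(1-x)^{m-\alpha}/\Gamma(m-\alpha+1)$ need not be bounded by $1$, since $\Gamma$ dips below $1$ on $(1,2)$; hence one should absorb this fixed, $x$- and $n$-independent constant into the choice of $\epsilon$ supplied to Theorem~\ref{thm2}, rather than asserting the final inequality literally.
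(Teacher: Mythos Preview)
Your proposal is correct and matches the paper's intent: the paper does not write out a proof of Theorem~\ref{MThCa1} at all, merely prefacing it with the remark that ``similar results\ldots hold for the right sided Caputo fractional derivatives,'' so the expected argument is precisely your first one, a verbatim mirror of the proof of Theorem~\ref{MThCa} with the kernel $(x-t)^{m-\alpha-1}$ on $[0,x]$ replaced by $(\tau-x)^{m-\alpha-1}$ on $[x,1]$. Your reflection argument via $g(t)=f(1-t)$ is a clean alternative that the paper does not mention, and your observation that $\Gamma(m-\alpha+1)$ can dip below $1$ (so the constant should be absorbed into the choice of $\epsilon$) is in fact more careful than the paper's own proof of Theorem~\ref{MThCa}, which tacitly treats $\epsilon\,x^{m-\alpha}/\Gamma(m-\alpha+1)<\varepsilon$ as immediate.
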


\begin{theorem}
Let $m-1\leq\alpha< m$ and $f\in AC^{m+2}[0, 1]$. Then, for
any $x\in[0, 1]$,
\begin{equation*}
\lim_{n\rightarrow\infty}\left[2n({}_{x}^CD_{1}^{\alpha}B_{n}(f;x)
-{}_{x}^{C}D_{1}^{\alpha}f(x))\right]
={}_{x}^{C}D_{1}^{\alpha}(x(1-x)f''(x)).
\end{equation*}
\end{theorem}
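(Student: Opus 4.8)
The plan is to mirror the proof of Theorem~\ref{MThm}, replacing every ingredient by its right sided counterpart. First I would invoke Voronovskaya's theorem (Theorem~\ref{thm11}): since $f\in AC^{m+2}[0,1]$ and $m\geq 1$ (because $m-1\leq\alpha<m$ with $\alpha\geq 0$), the derivative $f''$ is continuous on $[0,1]$, so $2n\bigl(B_n(f;x)-f(x)\bigr)\to x(1-x)f''(x)$ for every $x\in[0,1]$, and in fact one has the refined expansion $B_n(f;x)=f(x)+\tfrac{1}{2n}x(1-x)f''(x)+o(1/n)$ with the remainder controlled on all of $[0,1]$.

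Next I would apply the right sided Caputo operator ${}_{x}^CD_{1}^{\alpha}$ to both sides. Writing $g_n(x):=2n\bigl(B_n(f;x)-f(x)\bigr)$, linearity gives $2n\bigl({}_{x}^CD_{1}^{\alpha}B_n(f;x)-{}_{x}^CD_{1}^{\alpha}f(x)\bigr)={}_{x}^CD_{1}^{\alpha}g_n(x)$, and by definition
\[
{}_{x}^CD_{1}^{\alpha}g_n(x)=\frac{(-1)^m}{\Gamma(m-\alpha)}\int_x^1\frac{g_n^{(m)}(\tau)}{(\tau-x)^{\alpha+1-m}}\,d\tau .
\]
Since $\alpha+1-m\in[0,1)$, the kernel $(\tau-x)^{-(\alpha+1-m)}$ is integrable on $[x,1]$; hence it suffices to show that $g_n^{(m)}\to\bigl(x(1-x)f''(x)\bigr)^{(m)}$ in a way that permits passage to the limit under the integral sign --- for instance uniformly on $[0,1]$, or dominated by a fixed integrable function. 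This is the Voronovskaya-type statement for the $m$-th derivatives of the Bernstein polynomials, which follows from the finite-difference representation of $B_n^{(m)}(f;\cdot)$ together with the hypothesis $f\in AC^{m+2}[0,1]$.

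Finally, I would record that the right-hand side ${}_{x}^CD_{1}^{\alpha}\bigl(x(1-x)f''(x)\bigr)$ is well defined: under $f\in AC^{m+2}[0,1]$ the function $x(1-x)f''(x)$ has absolutely continuous derivatives up to order $m$, so its Caputo derivative of order $\alpha<m$ exists almost everywhere on $[0,1]$ (see, e.g., \cite{KilSriTru:06:The}). Combining the three steps yields the claimed limit.

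The main obstacle is the second step: rigorously justifying the interchange of $\lim_{n\to\infty}$ with the integro-differential operator ${}_{x}^CD_{1}^{\alpha}$. The subtlety is twofold --- one needs a quantitative, not merely pointwise, Voronovskaya estimate for $B_n^{(m)}(f)$, and one needs it to be compatible with the behaviour of the weakly singular kernel near $\tau=x$. Integrability of the kernel makes a dominated-convergence argument the natural route, but producing the integrable majorant is where the real work lies; everything else is bookkeeping that parallels Theorem~\ref{MThm}.
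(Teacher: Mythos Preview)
Your proposal takes essentially the same approach as the paper: the paper gives no separate proof for this right-sided statement, merely listing it as the analogue of Theorem~\ref{MThm}, whose own proof is a one-line appeal to Voronovskaya's theorem, the properties of the Caputo derivative, and the almost-everywhere existence of the right-hand side. You reproduce exactly this skeleton and, in fact, go further than the paper by explicitly isolating the interchange of $\lim_{n\to\infty}$ with ${}_{x}^CD_{1}^{\alpha}$ as the nontrivial point --- a subtlety the paper's sketch leaves implicit.
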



\subsection{Left and right sided Riemann--Liouville fractional integrals of $\mathbf{B_n(f;x)}$}

In order to derive the left and right sided Riemann--Liouville fractional integrals
of the Bernstein polynomials, we replace $\alpha$ by $-\alpha$ in \eqref{pre1} and \eqref{pre2}:
\[
{}_0I_x
B_{n}(f;x)=\sum_{i=0}^{n}\sum_{j=0}^{n-i}\binom{n}{i}\binom{n-i}{j}f\left(\frac
i
n\right)\frac{(-1)^j\Gamma(i+j+1)}{\Gamma(i+j+1+\alpha)}x^{i+j+\alpha}
\]
and
\[
{}_xI_1
B_{n}(f;x)=\sum_{i=0}^{n}\sum_{j=0}^{i}\binom{n}{i}\binom{i}{j}f\left(\frac
i
n\right)\frac{(-1)^j\Gamma(n-i+j+1)}{\Gamma(n-i+j+1+\alpha)}(1-x)^{n-i+j+\alpha}.
\]

\begin{theorem}
\label{MThLRL}
If $m-1\leq\alpha< m$, $f\in C[0,1]$, and $\varepsilon>0$, then
\[
\|_{0}I_{x}^{\alpha}f-{}_0I_x^{\alpha}B_{n}(f)\|<\varepsilon
\]
uniformly on the interval $[0,1]$.
\end{theorem}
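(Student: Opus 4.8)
The plan is to mimic the proof of Theorem~\ref{MThCa}, but the situation is in fact simpler: the Riemann--Liouville fractional integral ${}_{0}I_{x}^{\alpha}$ involves no differentiation, so only continuity of $f$ is needed and we may bypass the series representations of ${}_{0}I_{x}^{\alpha}B_n(f;x)$ entirely. First I would use linearity of the operator ${}_{0}I_{x}^{\alpha}$ to write ${}_{0}I_{x}^{\alpha}f(x)-{}_{0}I_{x}^{\alpha}B_n(f;x)={}_{0}I_{x}^{\alpha}\bigl(f-B_n(f)\bigr)(x)$, noting that $f-B_n(f)\in C[0,1]$, and then estimate this quantity pointwise in $x\in[0,1]$.

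Next, given $\varepsilon>0$, I would apply Bernstein's Theorem~\ref{thm1} (equivalently, its Corollary) to the continuous function $f$ to obtain an integer $n$ with $\|f-B_n(f)\|<\varepsilon\,\Gamma(\alpha+1)$. Then, for every $x\in[0,1]$,
\begin{equation*}
\begin{split}
\bigl|{}_{0}I_{x}^{\alpha}f(x)-{}_{0}I_{x}^{\alpha}B_n(f;x)\bigr|
&\leq \frac{1}{\Gamma(\alpha)}\int_0^x (x-t)^{\alpha-1}\bigl|f(t)-B_n(f;t)\bigr|\,dt\\
&\leq \frac{\|f-B_n(f)\|}{\Gamma(\alpha)}\int_0^x (x-t)^{\alpha-1}\,dt
= \|f-B_n(f)\|\,\frac{x^{\alpha}}{\Gamma(\alpha+1)}
\leq \frac{\|f-B_n(f)\|}{\Gamma(\alpha+1)}<\varepsilon,
\end{split}
\end{equation*}
where the integral $\int_0^x(x-t)^{\alpha-1}\,dt=x^{\alpha}/\alpha$ converges (as an improper integral near $t=x$) precisely because $\alpha>0$, i.e. $\alpha-1>-1$. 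Taking the maximum over $x\in[0,1]$ yields $\|{}_{0}I_{x}^{\alpha}f-{}_{0}I_{x}^{\alpha}B_n(f)\|<\varepsilon$, as claimed.

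Finally, I would add a short remark that the hypothesis $m-1\leq\alpha<m$ with $m\in\mathbb N$ is carried over only for uniformity with the rest of the section: the argument uses nothing beyond $\alpha>0$, and the degenerate case $\alpha=0$ reduces directly to Bernstein's theorem, since ${}_{0}I_{x}^{0}$ is the identity. I do not expect any genuine obstacle here; the only points deserving a word of care are the convergence of the weakly singular kernel $(x-t)^{\alpha-1}$ near $t=x$ (immediate from $\alpha>0$) and the uniformity of the bound in $x$ (immediate from $x^{\alpha}\leq1$ on $[0,1]$), and an analogous proof gives the corresponding statement for the right sided integral ${}_{x}I_{1}^{\alpha}$.
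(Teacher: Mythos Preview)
Your proof is correct and follows essentially the same route as the paper's own argument: invoke Bernstein's uniform approximation of $f$ by $B_n(f)$, insert this bound into the integral defining ${}_{0}I_{x}^{\alpha}$, and use $\int_0^x(x-t)^{\alpha-1}\,dt=x^{\alpha}/\alpha\le 1/\alpha$. Your version is in fact slightly more careful with the $\varepsilon$ bookkeeping (choosing $\|f-B_n(f)\|<\varepsilon\,\Gamma(\alpha+1)$ so that the final bound is exactly $<\varepsilon$), and your remarks on the role of $\alpha>0$ and the redundancy of the $m$ hypothesis are apt.
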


\begin{proof}
Using Theorem~\ref{thm1}, we have $\|f-B_n(f)\|<\epsilon$.
Therefore,
\begin{equation*}
\begin{split}
\bigl| {_{0}I_{x}^{\alpha}}f(x) - {}_0I_x^{\alpha} B_{n}(f;x)\bigr|
&=\left|\frac{1}{\Gamma(\alpha)}\left(\int_0^x(x-t)^{\alpha-1}(f(t)-B_n(f;t))\,dt\right)\right|\\
&\leq\frac{1}{\Gamma(\alpha)}\left(\int_0^x(x-t)^{\alpha-1}\left|f(t)-B_n(f;t)\right|\,dt\right)\\
&\leq\frac{1}{\Gamma(\alpha)}\left(\int_0^x(x-t)^{\alpha-1}\|f-B_n(f)\|\,dt\right)\\
&=\epsilon\ \frac{x^{\alpha}}{\Gamma(\alpha+1)}<\varepsilon.
\end{split}
\end{equation*}
\end{proof}

\begin{theorem}
\label{MThRRL}
If $m-1\leq\alpha< m$, $f\in C[0,1]$, and $\varepsilon>0$, then
\[
\|_{x}I_{1}^{\alpha}f-{}_xI_1^{\alpha}B_{n}(f)\|<\varepsilon
\]
uniformly on the interval $[0,1]$.
\end{theorem}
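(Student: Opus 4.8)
The plan is to repeat, mutatis mutandis, the argument that proved Theorem~\ref{MThLRL}, with the left sided Riemann--Liouville kernel replaced by the right sided one. First I would apply Bernstein's Theorem~\ref{thm1}: since $f\in C[0,1]$, given $\varepsilon>0$ there is an integer $n$ for which $\|f-B_n(f)\|<\epsilon$, where the auxiliary accuracy $\epsilon>0$ will be fixed at the end in terms of $\varepsilon$.

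Next, starting from the definition of the right sided fractional integral, I would estimate
\begin{equation*}
\begin{split}
\bigl| {}_xI_1^{\alpha}f(x) - {}_xI_1^{\alpha}B_n(f;x)\bigr|
&= \left|\frac{1}{\Gamma(\alpha)}\int_x^1(\tau-x)^{\alpha-1}\bigl(f(\tau)-B_n(f;\tau)\bigr)\,d\tau\right|\\
&\leq\frac{1}{\Gamma(\alpha)}\int_x^1(\tau-x)^{\alpha-1}\bigl|f(\tau)-B_n(f;\tau)\bigr|\,d\tau\\
&\leq\frac{\|f-B_n(f)\|}{\Gamma(\alpha)}\int_x^1(\tau-x)^{\alpha-1}\,d\tau
=\|f-B_n(f)\|\,\frac{(1-x)^{\alpha}}{\Gamma(\alpha+1)},
\end{split}
\end{equation*}
using that $(\tau-x)^{\alpha-1}$, although singular at $\tau=x$ when $\alpha<1$, is integrable on $[x,1]$ because $\alpha>0$, with $\int_x^1(\tau-x)^{\alpha-1}\,d\tau=(1-x)^{\alpha}/\alpha$. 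Since $0\le 1-x\le 1$, the right hand side is bounded above by $\epsilon/\Gamma(\alpha+1)$, and this bound is uniform in $x\in[0,1]$.

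Finally, to conclude $\|{}_xI_1^{\alpha}f-{}_xI_1^{\alpha}B_n(f)\|<\varepsilon$ it suffices to take $\epsilon=\varepsilon\,\Gamma(\alpha+1)$ in the application of Theorem~\ref{thm1} (or, to be safe when $\Gamma(\alpha+1)<1$, $\epsilon=\varepsilon\min\{1,\Gamma(\alpha+1)\}$). I do not expect any real obstacle here: the computation is entirely parallel to that of Theorem~\ref{MThLRL}, the only point deserving a word of care being the constant $1/\Gamma(\alpha+1)$, which need not be $\le 1$ for $0<\alpha<1$ and therefore forces the slightly smaller choice of the intermediate accuracy $\epsilon$.
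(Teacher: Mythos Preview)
Your proposal is correct and is exactly what the paper has in mind: its proof of Theorem~\ref{MThRRL} consists of the single sentence ``The proof is similar to the proof of Theorem~\ref{MThLRL},'' and your argument is the obvious right-sided mirror of that computation. Your extra care with the constant $1/\Gamma(\alpha+1)$ (which indeed exceeds $1$ for $0<\alpha<1$) is a refinement the paper glosses over even in the proof of Theorem~\ref{MThLRL}.
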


\begin{proof}
The proof is similar to the proof of Theorem~\ref{MThLRL}.
\end{proof}

\begin{theorem}
\label{intord1}
Let $m-1\leq\alpha< m$ and $f\in AC^{m+2}[0, 1]$. Then, for any
$x\in[0, 1]$,
\begin{equation}
\label{order2}
\lim_{n\rightarrow\infty}\left[2n({}_0I_x^\alpha B_{n}(f;x)
-{}_{0}I_{x}^{\alpha}f(x))\right]
={}_{0}I_{x}^{\alpha}(x(1-x)f''(x)).
\end{equation}
\end{theorem}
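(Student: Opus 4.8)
The plan is to derive the statement from Voronovskaya's theorem (Theorem~\ref{thm11}) by applying the left sided Riemann--Liouville fractional integral ${}_0I_x^{\alpha}$ and then interchanging the limit with the integral. The case $\alpha=0$ (which can occur only for $m=1$) is trivial, since ${}_0I_x^{0}$ is the identity and the claim reduces to Theorem~\ref{thm11}; hence I assume $\alpha>0$.

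First I would use linearity of ${}_0I_x^{\alpha}$ --- legitimate because $f$ and $B_n(f)$ are continuous on $[0,1]$, so their fractional integrals exist --- to write
\begin{equation*}
2n\bigl({}_0I_x^{\alpha}B_{n}(f;x)-{}_{0}I_{x}^{\alpha}f(x)\bigr)
={}_0I_x^{\alpha}\!\left[2n\bigl(B_{n}(f;\cdot)-f\bigr)\right]\!(x)
=\frac{1}{\Gamma(\alpha)}\int_0^x(x-t)^{\alpha-1}\,2n\bigl(B_{n}(f;t)-f(t)\bigr)\,dt .
\end{equation*}
Since $f\in AC^{m+2}[0,1]\subseteq C^{2}[0,1]$, Theorem~\ref{thm11} gives the pointwise convergence $2n\bigl(B_n(f;t)-f(t)\bigr)\to t(1-t)f''(t)$ for every $t\in[0,1]$, so it remains only to move the limit inside the integral.

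For that I would invoke the dominated convergence theorem, the required majorant coming from the classical second-moment estimate for Bernstein polynomials. Writing $B_n(f;t)-f(t)=\sum_{i=0}^{n}\binom{n}{i}t^i(1-t)^{n-i}\bigl(f(i/n)-f(t)\bigr)$, Taylor's formula with Lagrange remainder together with the moment identities $\sum_i\binom{n}{i}t^i(1-t)^{n-i}(i/n-t)=0$ and $\sum_i\binom{n}{i}t^i(1-t)^{n-i}(i/n-t)^2=t(1-t)/n$ gives $|B_n(f;t)-f(t)|\le\frac{t(1-t)}{2n}\|f''\|\le\frac{\|f''\|}{8n}$, hence $\bigl|2n(B_n(f;t)-f(t))\bigr|\le\frac14\|f''\|$ for all $n$ and all $t\in[0,1]$. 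Thus the integrands are dominated by $\frac{\|f''\|}{4\,\Gamma(\alpha)}(x-t)^{\alpha-1}$, which is integrable on $[0,x]$ because $\alpha>0$, and dominated convergence yields
\begin{equation*}
\lim_{n\to\infty}\frac{1}{\Gamma(\alpha)}\int_0^x(x-t)^{\alpha-1}\,2n\bigl(B_{n}(f;t)-f(t)\bigr)\,dt
=\frac{1}{\Gamma(\alpha)}\int_0^x(x-t)^{\alpha-1}\,t(1-t)f''(t)\,dt
={}_{0}I_{x}^{\alpha}\bigl(x(1-x)f''(x)\bigr),
\end{equation*}
which is \eqref{order2}. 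I would also remark that the right-hand side is a well-defined continuous function: $g(t)=t(1-t)f''(t)$ is continuous since $f''\in C^{m-1}[0,1]$, so ${}_0I_x^{\alpha}g$ exists for $\alpha>0$.

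The only genuine obstacle is the interchange of limit and integral, handled as above; everything else reduces to linearity and the elementary moment identities. A little care is needed near the endpoint $t=x$ when $0<\alpha<1$, where the kernel is singular, but integrability of $(x-t)^{\alpha-1}$ on $[0,x]$ disposes of this. This is precisely the scheme behind Theorem~\ref{MThm}, with the Caputo derivative there replaced by the Riemann--Liouville integral here and Theorem~\ref{thm2} replaced by Theorem~\ref{thm11}.
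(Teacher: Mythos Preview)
Your proof is correct and follows the same route as the paper---both derive \eqref{order2} from Voronovskaya's theorem (Theorem~\ref{thm11}) applied under the fractional integral. The paper's proof is terser, simply asserting that the right-hand side exists and that the equality follows from Theorem~\ref{thm11}; you have supplied the missing justification for interchanging limit and integral via dominated convergence with the explicit majorant $\tfrac{1}{4}\|f''\|$, which is a genuine improvement in rigor.
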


\begin{proof}
The expression on the right-hand side of \eqref{order2} exists almost everywhere
on the interval $[0,1]$ \cite{KilSriTru:06:The}. Equality \eqref{order2} follows
from Theorem~\ref{thm11}.
\end{proof}

Analogous result holds for the right sided Riemann--Liouville fractional integral:

\begin{theorem}
\label{intord2}
Let $m-1\leq\alpha< m$ and $f\in AC^{m+2}[0, 1]$. Then, for any
$x\in[0, 1]$,
\begin{equation*}
\lim_{n\rightarrow\infty}\left[2n({}_xI_1^\alpha B_{n}(f;x)
-{}_{x}I_{1}^{\alpha}f(x))\right]
={}_{x}I_{1}^{\alpha}(x(1-x)f''(x)).
\end{equation*}
\end{theorem}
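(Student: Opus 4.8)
The plan is to adapt, almost verbatim, the short argument given for Theorem~\ref{intord1}, transferring everything from the left sided operator ${}_0I_x^\alpha$ to the right sided operator ${}_xI_1^\alpha$. First I would observe that the hypothesis $f\in AC^{m+2}[0,1]$ forces $f''\in C[0,1]$ (since $m\ge1$), so that $x(1-x)f''(x)\in C[0,1]\subset L^1[0,1]$ and the right sided fractional integral ${}_xI_1^\alpha\bigl(x(1-x)f''(x)\bigr)$ is well defined and finite for a.e.\ $x\in[0,1]$; the precise mapping properties are those recalled from \cite{KilSriTru:06:The}, exactly as invoked for \eqref{order2}.

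Next, using linearity of ${}_xI_1^\alpha$, I would write, for each fixed $x\in[0,1]$,
\[
2n\bigl({}_xI_1^\alpha B_{n}(f;x)-{}_xI_1^\alpha f(x)\bigr)={}_xI_1^\alpha\bigl(g_n\bigr)(x),\qquad g_n(\tau):=2n\bigl(B_n(f;\tau)-f(\tau)\bigr),
\]
and apply Theorem~\ref{thm11} (Voronovskaya) to get the pointwise convergence $g_n(\tau)\to\tau(1-\tau)f''(\tau)$ for every $\tau\in[0,1]$. At the level of rigor of the proof of Theorem~\ref{intord1}, the claim then follows by letting $n\to\infty$ inside the integral; the identity obtained is precisely ${}_xI_1^\alpha\bigl(x(1-x)f''(x)\bigr)$.

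The one point that genuinely needs attention --- and the expected main obstacle --- is the interchange of the limit and the integral $\frac{1}{\Gamma(\alpha)}\int_x^1(\tau-x)^{\alpha-1}g_n(\tau)\,d\tau$, whose kernel is weakly singular when $0<\alpha<1$. I would close this gap by dominated convergence: the classical estimate for Bernstein polynomials of twice differentiable functions (see, e.g., \cite{Dav}) gives $|B_n(f;\tau)-f(\tau)|\le\frac{\tau(1-\tau)}{2n}\|f''\|$, hence $|g_n(\tau)|\le\tau(1-\tau)\|f''\|$ for all $n$, so the integrands are dominated on $[x,1]$ by the fixed integrable function $\tau\mapsto(\tau-x)^{\alpha-1}\tau(1-\tau)\|f''\|$; Lebesgue's theorem then yields the asserted limit. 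As an even shorter route, the whole statement can be reduced to Theorem~\ref{intord1} by the reflection $x\mapsto1-x$: with $\tilde f(t):=f(1-t)$ one has $B_n(f;1-x)=B_n(\tilde f;x)$, $\tilde f''(x)=f''(1-x)$, and $\bigl({}_xI_1^\alpha g\bigr)(x)=\bigl({}_0I_y^\alpha\tilde g\bigr)(1-x)$ for $\tilde g(s):=g(1-s)$, so that the desired identity drops out of \eqref{order2}. I would present the reduction as the main line and keep the write-up as brief as that of Theorem~\ref{intord1}.
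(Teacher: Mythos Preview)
Your proposal is correct and takes essentially the same approach as the paper, which gives no separate proof for this result and simply presents it as the right-sided analogue of Theorem~\ref{intord1} (itself proved in one line by appeal to Voronovskaya's Theorem~\ref{thm11} and the a.e.\ existence of the right-hand side). Your treatment is in fact more careful than the paper's, since you supply the dominated-convergence justification for passing the limit through the weakly singular integral and offer the reflection reduction to Theorem~\ref{intord1}, neither of which the paper makes explicit.
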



\section{Numerical experiments}
\label{sec:4}

We present two examples in order to illustrate
the numerical usefulness of Theorems~\ref{MThCa}--\ref{intord2}.
To explore the dependence of the error with the parameter $n$,
we use the following definitions:
\begin{equation*}
\begin{split}
E(n,\alpha) &= \max\limits_{1 \leqslant j \leqslant N}
\left |   {}_0^CD_{x}^{\alpha}y(x_j) -{}_0^CD_{x}^{\alpha}B_{n}(y;x_j) \right |,
\quad x_j=jh, \quad j=1,\ldots,N,\\
E(n,-\alpha) &= \max\limits_{1 \leqslant j \leqslant N}
\left |   {}_0I_{x}^{\alpha}y(x_j) -{}_0I_{x}^{\alpha}B_{n}(y;x_j) \right |,
\quad x_j=jh, \quad j=1,\ldots, N.
\end{split}
\end{equation*}
In our simulations we choose $N = 100$.
The \emph{experimentally order of convergence} (EOC) is considered
as in \cite{Kia:The:10}: for ${}_0^CD_{x}^{\alpha}y(x)$
$EOC=\left|\log_{2}{\left|\frac{E(2n,\alpha)}{E(n,\alpha)}\right|}\right|$,
while for ${}_0I_{x}^{\alpha}y(x)$
$EOC=\left|\log_{2}{\left|\frac{E(2n,-\alpha)}{E(n,-\alpha)}\right|}\right|$.
The following notations are used throughout:
$y^{(\alpha)}(x) := {}_0^CD_x^{\alpha}y(x)$,
$y^{(-\alpha)}(x) := {}_0I_x^{\alpha}y(x)$,
$B_{n}^{(-\alpha)}(y;x) := {}_0I_x^{\alpha}B_{n}(y;x)$,
$B_{n}^{(\alpha)}(y;x) := {}_0^CD_x^{\alpha}B_{n}(y;x)$,
$EC(n) := y^{(\alpha)}(x)-B_{n}^{(\alpha)}(y;x)$,
and $EI(n) := y^{(-\alpha)}(x)-B_{n}^{(-\alpha)}(y;x)$.


\begin{example}
\label{EX1}
For our first example we choose $y(x)=e^x$, $x\in[0,1]$.
Using the definition of Caputo fractional derivative
and Riemann--Liouville fractional integral,
\begin{equation*}
\begin{split}
{}_0^CD_x^{\alpha}y(x)&=y^{(\alpha)}(x)=x^{1-\alpha}\sum_{k=0}^{\infty}
\frac{x^k}{\Gamma(k+2-\alpha)},\ \alpha\in(0,1],\\
{}_0I_x^{\alpha}y(x)&=y^{(-\alpha)}(x)=\sum_{k=0}^{\infty}
\frac{x^{k+\alpha}}{\Gamma(k+1+\alpha)},\ \alpha \geq 0.
\end{split}
\end{equation*}
The comparison of $y^{(\alpha)}(x)$ with $B_{n}^{\alpha}(y;x)$ and
$y^{(-\alpha)}(x)$ with $B_{n}^{-\alpha}(y;x)$, for $n=5,10,15,20$ and
$\alpha=\frac{1}{2}$, is shown in Fig.~\ref{Fig.1.} and Fig.~\ref{Fig.1.b},
respectively. In Tables~\ref{EXY.1.1} and~\ref{EXY.1.2}, the values of
$EC(n)$ and $EI(n)$, for $n=40,60,80,100$ and $\alpha=\frac{1}{2}$,
are reported for $x\in[0,1]$. The experimentally order of convergence
for $\alpha=\frac{1}{4}$, $\frac{1}{2}$, and $\frac{3}{4}$ is illustrated
in Tables~\ref{E.r1} and~\ref{E.r11} with $n = 20, 40, 80, 160, 320$.
\begin{figure}
\center\includegraphics[scale=0.35]{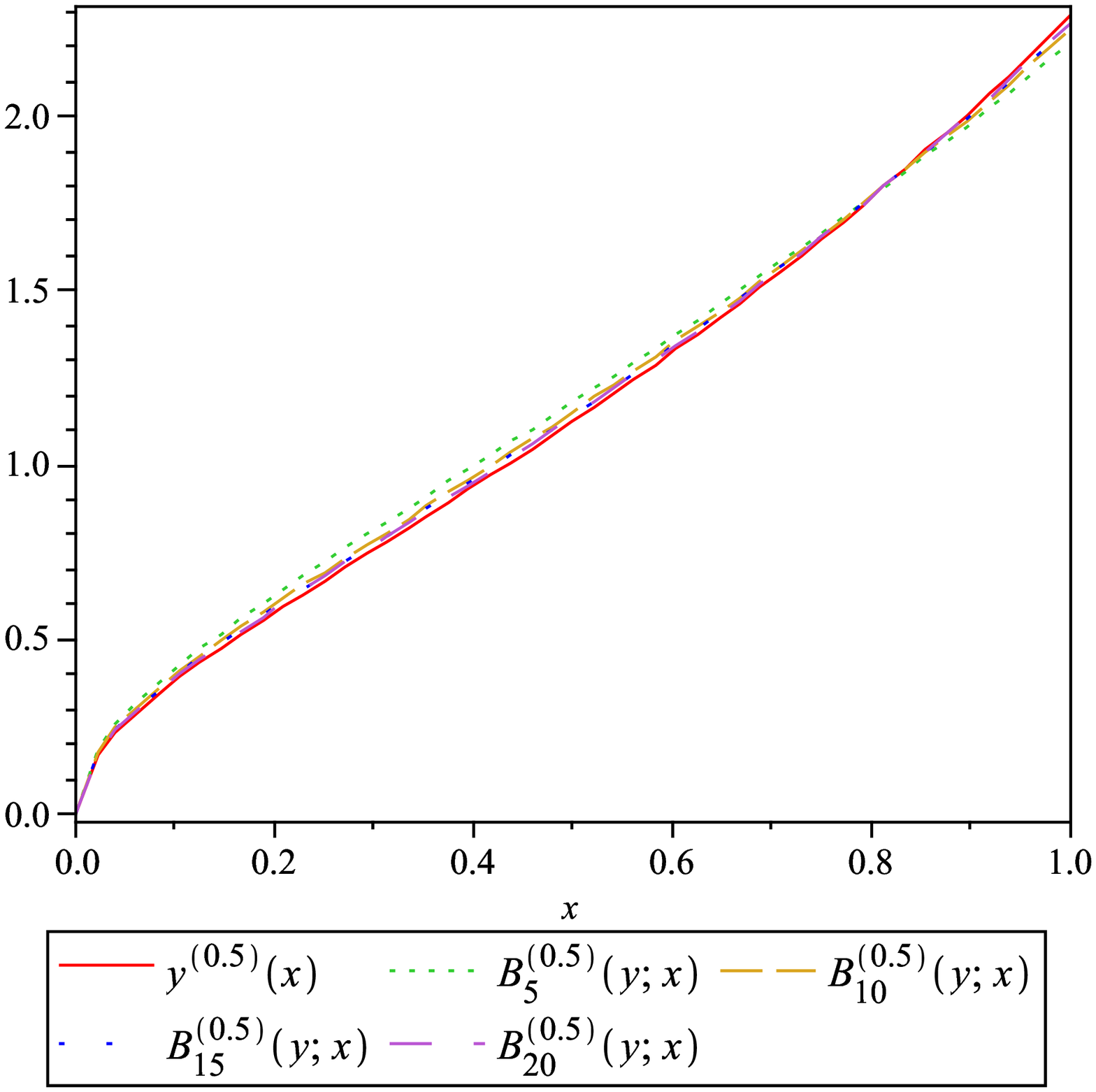}\includegraphics[scale=0.35]{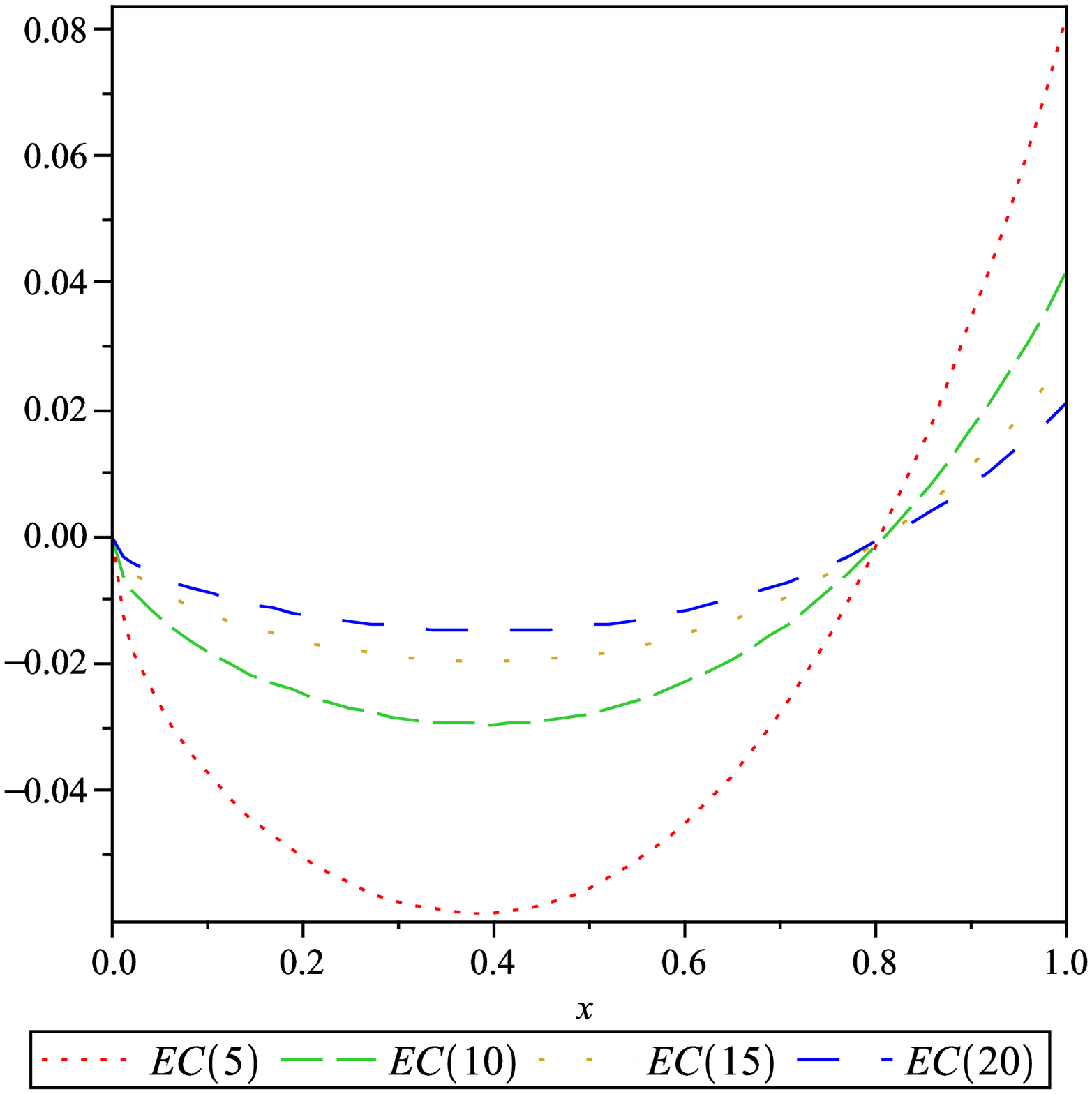}
\caption{Example~\ref{EX1}: $y^{(\alpha)}(x)$ versus $B_{n}^{(\alpha)}(y;x)$ (left)
and plot of $EC(n)$ (right) with $\alpha=\frac{1}{2}$
and $n=5,10,15,20$.} \label{Fig.1.}
\end{figure}
\begin{figure}
\center\includegraphics[scale=0.30]{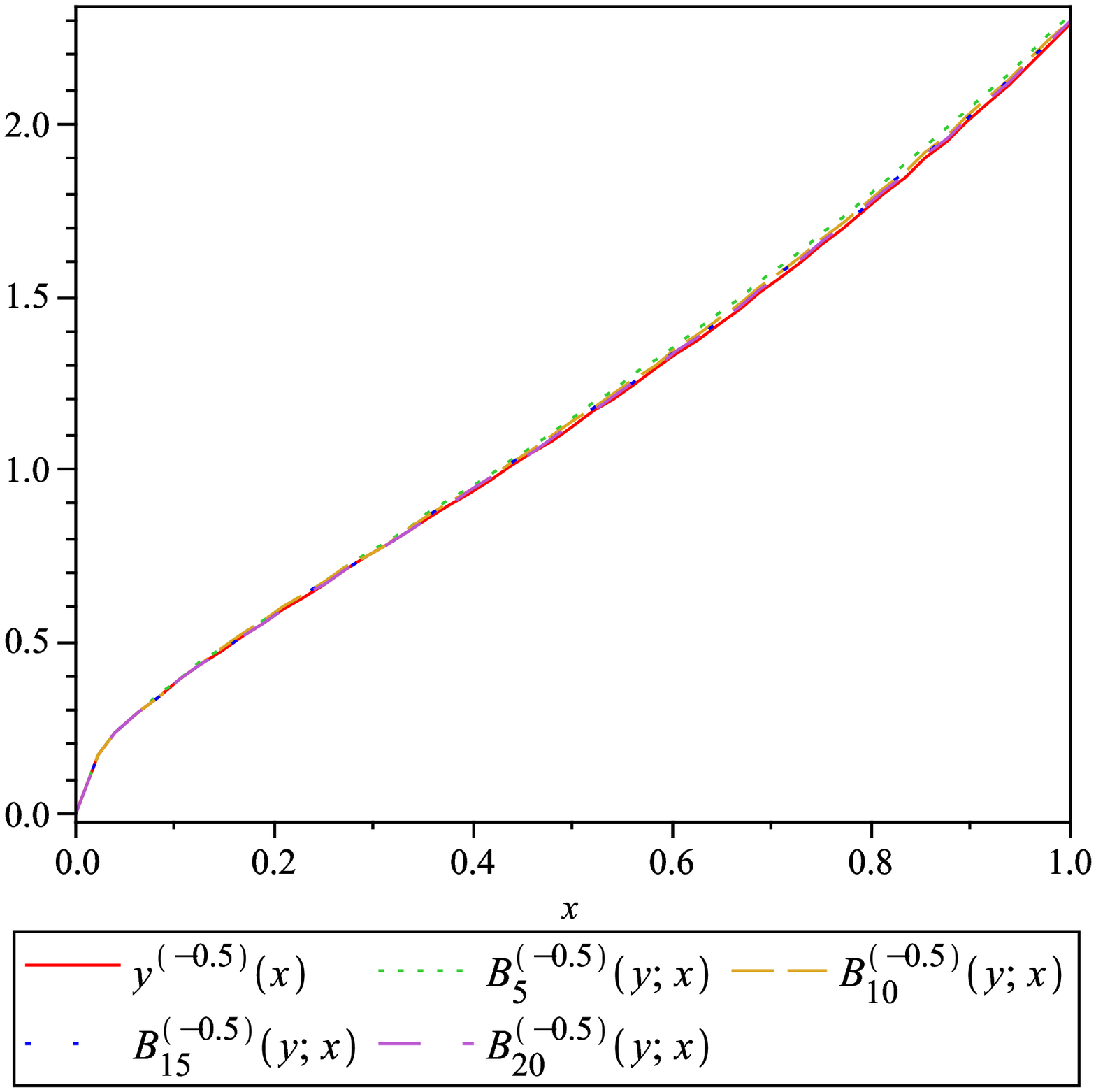}\includegraphics[scale=0.30]{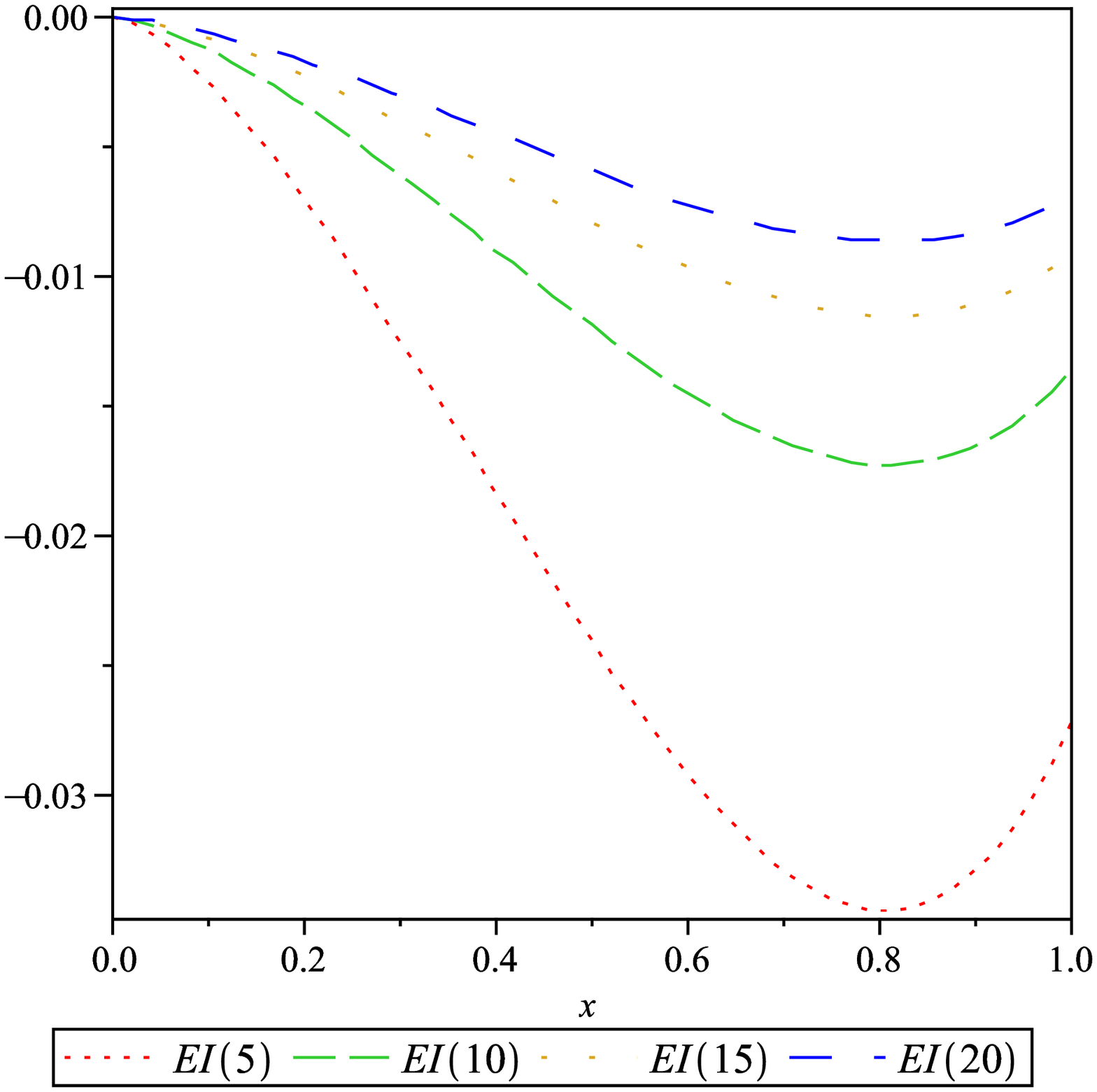}
\caption{Example \ref{EX1}:
$y^{(-\alpha)}(x)$ versus $B_{n}^{(-\alpha)}(y;x)$ (left)
and plot of $EI(n)$ (right) with $\alpha=\frac{1}{2}$
and $n=5,10,15,20$.}\label{Fig.1.b}
\end{figure}
\begin{table}
\centering
\begin{tabular}{ccccc}
\hline
$x$  &  $EC(40)$     & $EC(60)$      & $EC(80)$      & $EC(100)$ \\ \hline
0.0  & 0.0000000000  & 0.0000000000  & 0.0000000000  & 0.0000000000 \\
0.2  & 0.0061036942  & 0.0040621962  & 0.0030440502  & 0.0024339892 \\
0.4  & 0.0073719719  & 0.0049132237  & 0.0036843732  & 0.0029472355 \\
0.6  & 0.0058767960  & 0.0039249810  & 0.0029463970  & 0.0023583970 \\
0.8  & 0.0004944290  & 0.0003392010  & 0.0002580000  & 0.0002081350 \\
1.0  & 0.0106309420  & 0.0070992270  & 0.0053288670  & 0.0042652620 \\ \hline
\end{tabular}
\caption{Example~\ref{EX1}: values of $EC(n)$ for $n=40,60,80,100$,
$\alpha=\frac{1}{2}$, and some values of $x\in[0,1]$.}\label{EXY.1.1}
\end{table}
\begin{table}
\centering
\begin{tabular}{ccccc}
\hline
$x$  &  $EI(40)$     & $EI(60)$      & $EI(80)$      & $EI(100)$ \\ \hline
0.0  & 0.0000000000  & 0.0000000000  & 0.0000000000  & 0.0000000000 \\
0.2  & 0.0008334426  & 0.0005544267  & 0.0004153698  & 0.0003320800 \\
0.4  & 0.0022223574  & 0.0014794349  & 0.0011087740  & 0.0008866339 \\
0.6  & 0.0036002970  & 0.0023986370  & 0.0017983900  & 0.0014384260 \\
0.8  & 0.0043154540  & 0.0028772970  & 0.0021580890  & 0.0017265270 \\
1.0  & 0.0034169630  & 0.0022786810  & 0.0017092750  & 0.0013675460\\ \hline
\end{tabular}
\caption{Example~\ref{EX1}: values of $EI(n)$ for $n=40,60,80,100$,
$\alpha=\frac{1}{2}$, and some values of $x\in[0,1]$.}
\label{EXY.1.2}
\end{table}
\begin{table}
\centering
\begin{tabular}{ccccccc}
\hline
$ n\ $&  &$EOC$, $\alpha=\frac{1}{4}$ & & $EOC$, $\alpha=\frac{1}{2}$ &  &$EOC$, $\alpha=\frac{3}{4}$\\ \hline
${20}$& \qquad\qquad & 0.9945716137 &\qquad & 0.9927445216 & \qquad &0.9905964160  \\
${40}$&  & 0.9972834530 &  & 0.9963632907 &  &0.9952824963  \\
${80}$ &  & 0.9986406642 &  &0.9981985963  &  &0.9976418495\\
${160}$&  & 0.9993064563&  &0.9990896057  &  &0.9987894922 \\
${320}$&  & 0.9996330493 & & 0.9994812690 &  & 0.9990779310\\ \hline
\end{tabular}
\caption{Example~\ref{EX1}: experimentally determined order of convergence for
${}_0^CD_{x}^{\alpha}e^x$, $\alpha=\frac{1}{4}$, $\frac{1}{2}$, $\frac{3}{4}$,
and different values of $n$.}\label{E.r1}
\end{table}
\begin{table}
\centering
\begin{tabular}{ccccccc}
\hline
$ n\ $&  & $EOC$, $\alpha=\frac{1}{4}$ & & $EOC$,
$\alpha=\frac{1}{2}$ &  & $EOC$, $\alpha=\frac{3}{4}$  \\ \hline
 ${20}$& \qquad\qquad & 0.9974779044 &\qquad & 0.9986421496 & \qquad &0.9996580722  \\
 ${40}$&  & 0.9987433922 &  & 0.9993300993 &  &0.9998404485  \\
 ${80}$ &  & 0.9993742741 &  &0.9996624232  &  &0.9999198418\\
 ${160}$&  & 0.9996853779&  &0.9998531789  &  &0.9999704282 \\
 ${320}$&  & 0.9998513835 & & 0.9998852549 &  & 0.9999719852\\ \hline
\end{tabular}
\caption{Example~\ref{EX1}: experimentally determined order of convergence for
${}_0I_{x}^{\alpha}e^x$, $\alpha=\frac{1}{4}$, $\frac{1}{2}$, $\frac{3}{4}$,
and different values of $n$.}\label{E.r11}
\end{table}
\end{example}


\begin{example}
\label{EX2}
Let $y(x)=\sin(x)$, $x\in[0,1]$.
The Caputo fractional derivative
and the Riemann--Liouville fractional integral
of $y$ are given in \cite{TakHir:09:Qua}:
\begin{gather*}
{}_0^CD_x^{\alpha}y(x)=y^{(\alpha)}(x)=x^{1-\alpha}\sum_{k=0}^{\infty}
\frac{(-1)^kx^{2k}}{\Gamma(2k+2-\alpha)},\ \alpha\in(0,1],\\
{}_0I_x^{\alpha}y(x)=y^{(-\alpha)}(x)=\sum_{k=0}^{\infty}
\frac{(-1)^kx^{2k+1+\alpha}}{\Gamma(2k+2+\alpha)},\ \alpha\geq 0.
\end{gather*}
The comparison of $y^{(\alpha)}(x)$ with $B_{n}^{(\alpha)}(y;x)$ and
$y^{(-\alpha)}(x)$ with $B_{n}^{(-\alpha)}(y;x)$, for $n=5,10,15,20$
and $\alpha=\frac{3}{4}$, is shown in Fig.~\ref{Fig.3} and Fig.~\ref{Fig.4},
respectively. In Tables~\ref{EXY.2.1} and~\ref{EXY.2.2},
$EC(n)$ and $EI(n)$ for $n=40,60,80,100$,
$\alpha=\frac{3}{4}$, and some values of $x\in[0,1]$,
are reported. The experimentally order of convergence for
$\alpha=\frac{1}{4}$, $\frac{1}{2}$, $\frac{3}{4}$ is shown
in Table~\ref{E.r2} for ${}_0^CD_{x}^{\alpha}\sin(x)$
and in Table~\ref{E.r3} for ${}_0I_{x}^{\alpha}\sin(x)$.
\begin{figure}
\center
\includegraphics[scale=0.30]{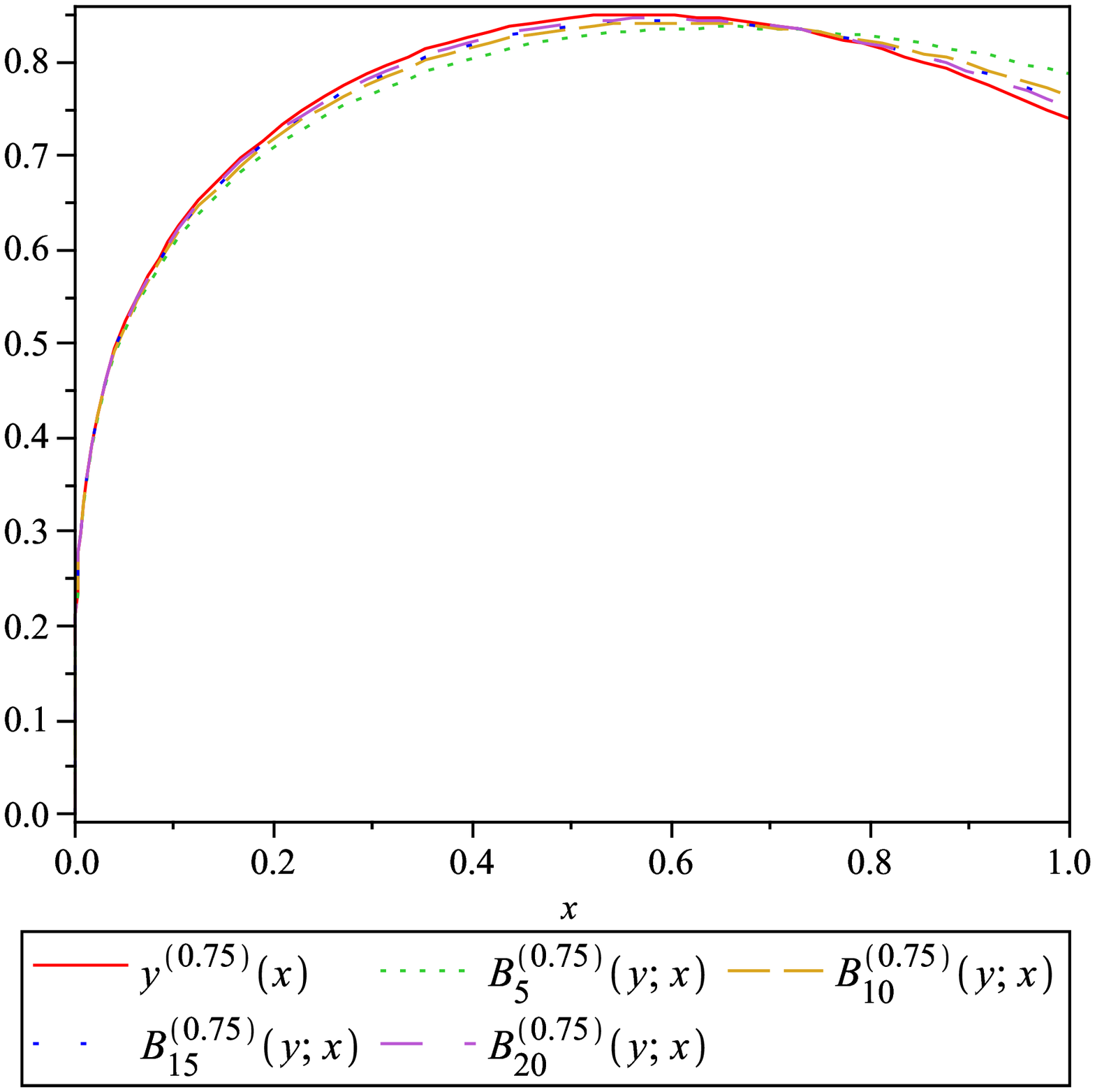}\includegraphics[scale=0.30]{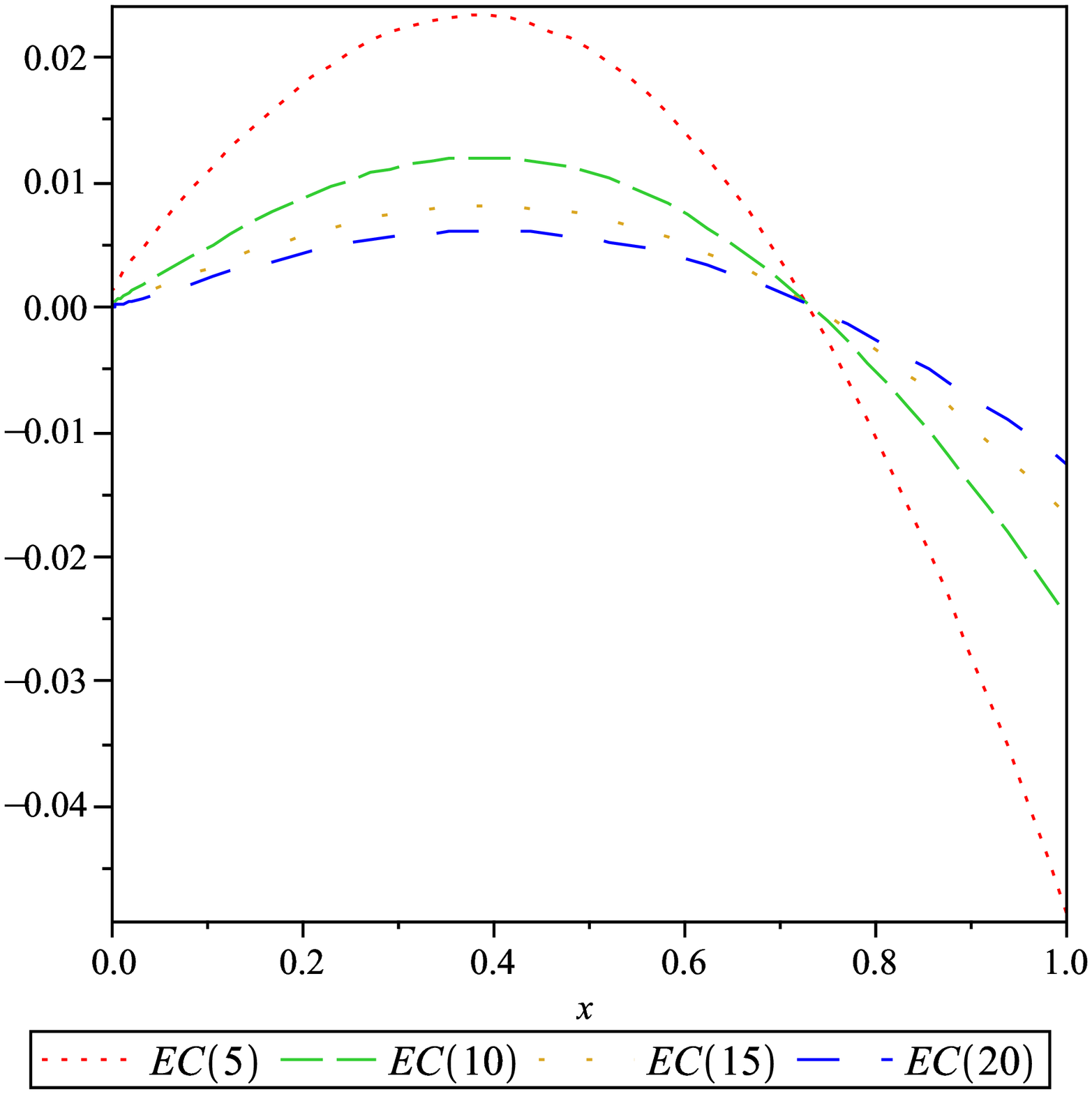}
\caption{Example~\ref{EX2}: $y^{(\alpha)}(x)$ versus
$B_{n}^{(\alpha)}(y;x)$ (left) and $EC(n)$ (right) for
$\alpha=\frac{3}{4}$ and $n=5,10,15,20$.}\label{Fig.3}
\end{figure}
\begin{figure}
\center
\includegraphics[scale=0.30]{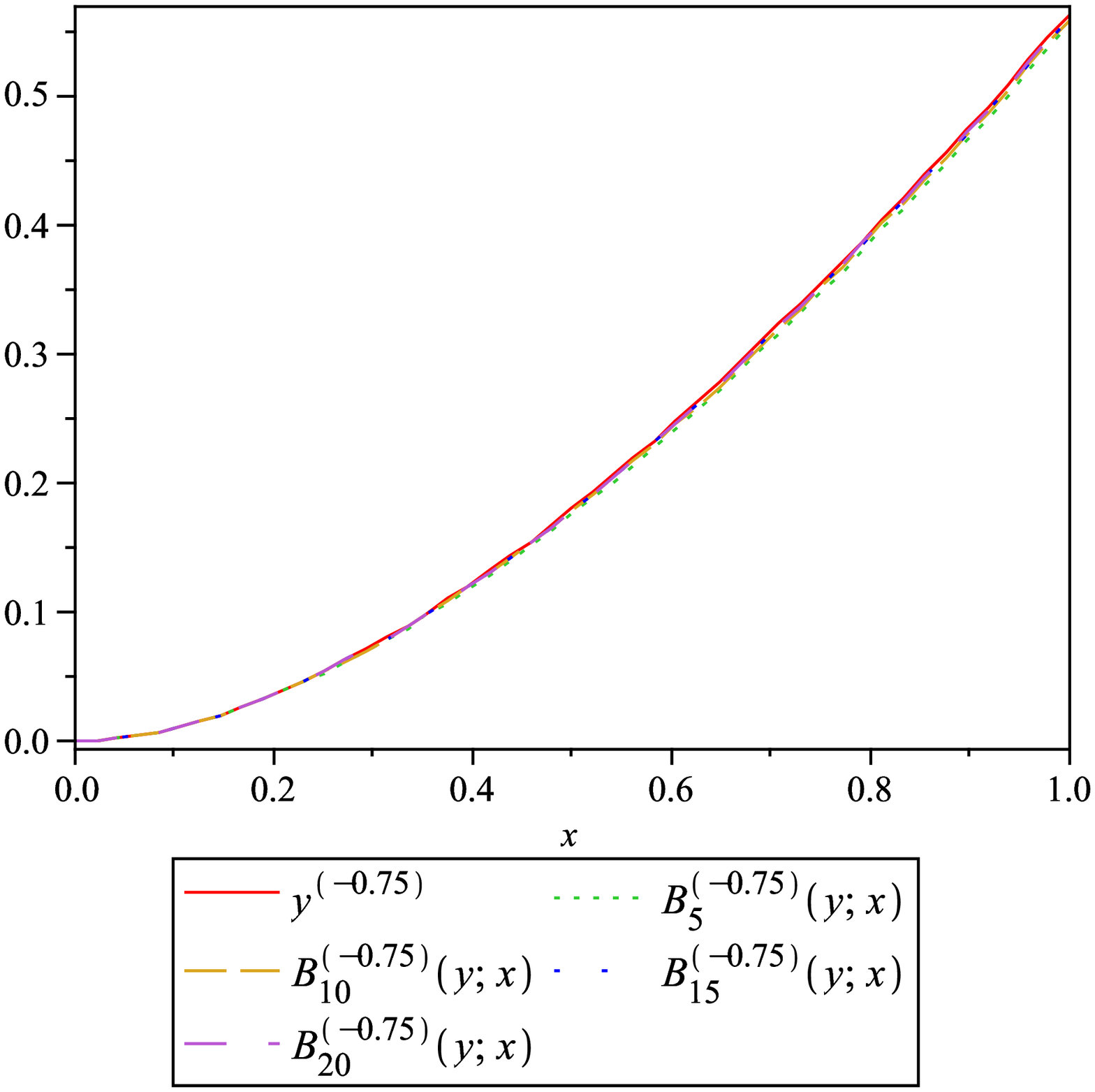}\includegraphics[scale=0.30]{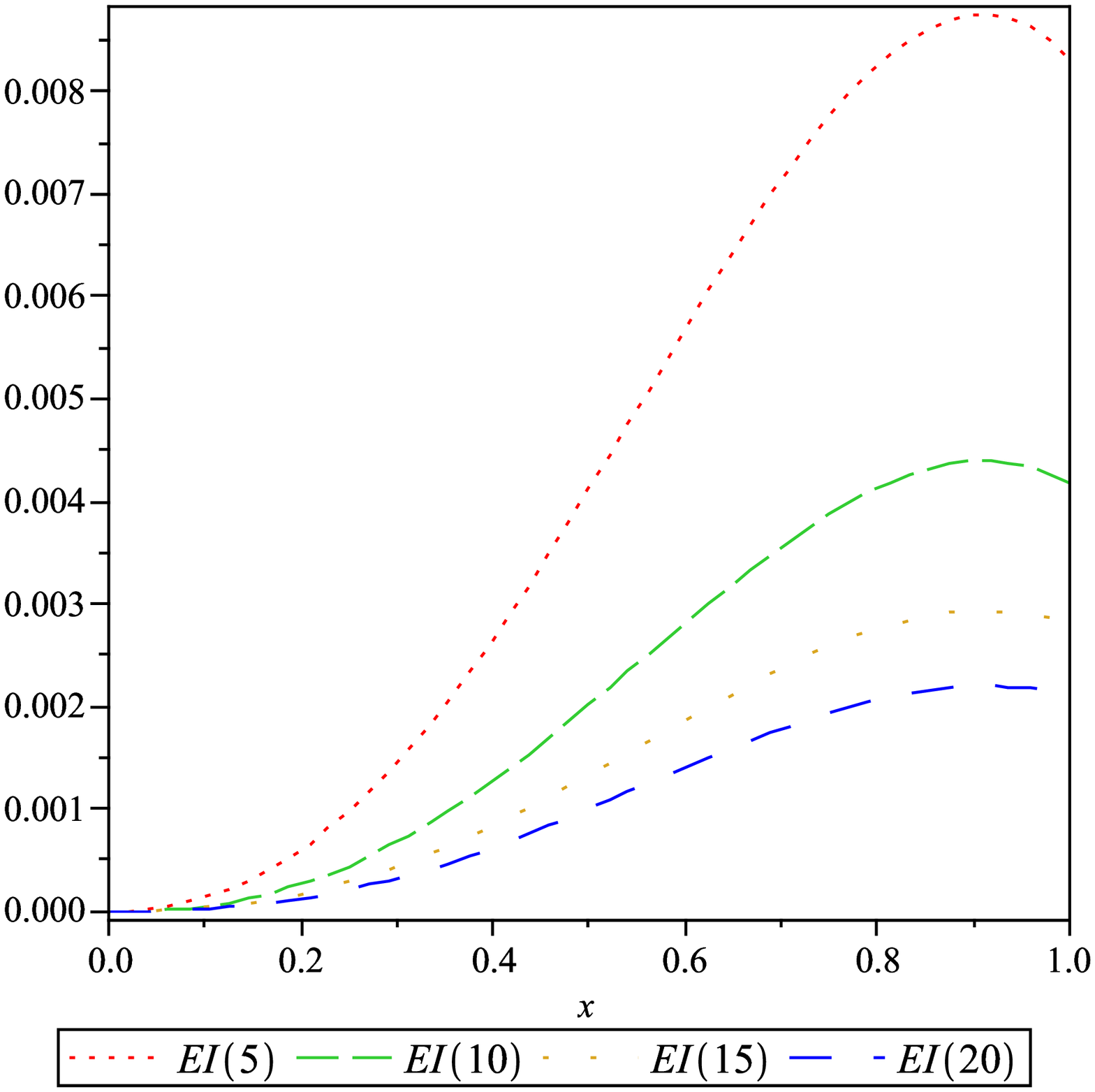}
\caption{Example~\ref{EX2}: $y^{(-\alpha)}(x)$ versus
$B_{n}^{(-\alpha)}(y;x)$ (left) and  $EI(n)$ (right) for
$\alpha=\frac{3}{4}$ and $n=5,10,15,20$.}\label{Fig.4}
\end{figure}
\begin{table}
\centering
\begin{tabular}{ccccc}\hline
$x$  &  $EC(40)$     & $EC(60)$      & $EC(80)$      & $EC(100)$    \\ \hline
0.0  & 0.0000000000  & 0.0000000000  & 0.0000000000  & 0.0000000000 \\
0.2  & 0.0021517476  & 0.0014317995  & 0.0010728366  & 0.0008577819 \\
0.4  & 0.0030879772  & 0.0020645865  & 0.0015506726  & 0.0012416113 \\
0.6  & 0.0019715930  & 0.0013211308  & 0.0009933784  & 0.0007959183 \\
0.8  & 0.0012727043  & 0.0008483037  & 0.0006361733  & 0.0005089138 \\
1.0  & 0.0063047021  & 0.0042104896  & 0.0031606168  & 0.0025297984 \\ \hline
\end{tabular}
\caption{Example~\ref{EX2}: comparison of $EC(n)$ for $n=40$, 60, 80, 100,
$\alpha=0.75$, and some values of $x\in[0,1]$.}\label{EXY.2.1}
\end{table}
\begin{table}
\centering
\begin{tabular}{ccccc}\hline
$x$  &  $EI(40)$     & $EI(60)$      & $EI(80)$      & $EI(100)$    \\ \hline
0.0  & 0.0000000000  & 0.0000000000  & 0.0000000000  & 0.0000000000 \\
0.2  & 0.0000588363  & 0.0000387162  & 0.0000288465  & 0.0000229857 \\
0.4  & 0.0003075654  & 0.0002042553  & 0.0001528957  & 0.0001221744 \\
0.6  & 0.0006967566  & 0.0004640866  & 0.0003479090  & 0.0002782522 \\
0.8  & 0.0010354049  & 0.0006904372  & 0.0005178911  & 0.0004143439 \\
1.0  & 0.0010486693  & 0.0006994101  & 0.0005246694  & 0.0004197893 \\ \hline
\end{tabular}
\caption{Example~\ref{EX2}: comparison of $EI(n)$ for $n=40$, 60, 80, 100,
$\alpha=0.75$, and some values of $x\in[0,1]$.}\label{EXY.2.2}
\end{table}
\begin{table}
\centering
\begin{tabular}{ccccccc} \hline
$ n\ $&  & $EOC$, $\alpha=\frac{1}{4}$ & & $EOC$,
$\alpha=\frac{1}{2}$ &  & $EOC$, $\alpha=\frac{3}{4}$  \\ \hline
 ${20}$&\qquad\qquad  & 0.9937752559 &\qquad & 0.9929561676 &\qquad  &0.9923368788  \\
 ${40}$&  & 0.9969076505 &  & 0.9965126525 &  &0.9962296261  \\
 ${80}$ &  & 0.9984573422 &  &0.9982238029  &  &0.9981122106\\
 ${160}$&  & 0.9992402809 &  &0.9991070234  &  &0.9990444572 \\
 ${320}$&  & 0.9994769560 & & 0.9996091945 &  & 0.9998994139\\ \hline
\end{tabular}
\caption{Example~\ref{EX2}: experimentally determined order of convergence
for ${}_0^CD_{x}^{\alpha}\sin(x)$, $\alpha=\frac{1}{4}$,
$\frac{1}{2}$, $\frac{3}{4}$, and different $n$.}\label{E.r2}
\end{table}
\begin{table}
\centering
\begin{tabular}{ccccccc}\hline
$n$ &  & $EOC$, $\alpha=\frac{1}{4}$ & & $EOC$, $\alpha=\frac{1}{2}$ &  &$EOC$, $\alpha=\frac{3}{4}$\\ \hline
 ${20}$&\qquad\qquad  & 0.9958096867 &\qquad & 0.9969600807 &\qquad  &0.9981760620  \\
 ${40}$&  & 0.9979038936 &  & 0.9984739450 &  &0.9990790991  \\
 ${80}$ &  & 0.9989581171 &  &0.9992369945  &  &0.9995370214\\
 ${160}$&  & 0.9994912812 &  &0.9996141316  &  &0.9997773614 \\
 ${320}$&  & 0.9997385874 & & 0.9998309959 &  & 0.9999065568\\ \hline
\end{tabular}
\caption{Example~\ref{EX2}: experimentally determined order
of convergence for ${}_0I_{x}^{\alpha}\sin(x)$, $\alpha=\frac{1}{4}$,
$\frac{1}{2}$, $\frac{3}{4}$, and different $n$.}\label{E.r3}
\end{table}
\end{example}


\section{Application to fractional differential equations}
\label{sec:5}

Consider the nonlinear fractional differential equation
\begin{equation}
\label{prob}
{}_0^CD_{t}^{\alpha}x(t)=f(t,x(t))
\end{equation}
subject to $x^{(k)}(0)=0$, $k=0,\ldots,m-1$,
where $m-1\leq \alpha<m$ and $t\in[0,1]$.
Note that a fractional differential equation \eqref{prob}
with nonzero initial conditions $x^{(k)}(0)=x_k$
can be easily transformed to a problem \eqref{prob}
with vanishing initial conditions. The advantage of considering zero initial values
of $x$ and its derivatives up to order $m-1$ is that under such conditions
the Riemann--Liouville and the Caputo derivatives coincide.
Let $x(t)\approx x_n(t)=\sum_{i=m}^{n}c_i\ t^{i}(1-t)^{n-i}$,
where $c_i=x(i/n)$, $i=m,\ldots,n$, are unknown coefficients to be determined.
Using \eqref{pre1},
\begin{equation*}
{}^{C}_{0}D_{t}^{\alpha}x(t)\approx {} ^{C}_{0}D_{t}^{\alpha}x_n(t)
=\sum_{i=m}^{n}\sum_{j=0}^{n-i}c_i\
\binom{n-i}{j}\frac{(-1)^j\Gamma(i+j+1)}{\Gamma(i+j+1-\alpha)}t^{i+j-\alpha}.
\end{equation*}
We approximate \eqref{prob} by ${} ^{C}_{0}D_{t}^{\alpha}x_n(t)=f(t,x_n(t))$
and, considering equidistant nodes $t_j =jh$ in the interval $[0, 1]$,
$j =0, \ldots n-m$, $h=\frac{1}{n}$, we transform this system into
${} ^{C}_{0}D_{t}^{\alpha}x_n(t_j)=f(t_j,x_n(t_j))$, $j=0,\ldots,n-m$.
The $n-m+1$ unknown coefficients $c_i$, $i=m, \ldots n$, are found
by solving this algebraic system of $n-m+1$ equations.


\subsection{Convergence and stability}

We start to prove an important result about the error committed
when solving the fractional differential equation
\eqref{prob} with our approximate method (Theorem~\ref{Conve}).
The second result asserts that the approximate solutions of \eqref{prob}
are stable with respect to the right-hand side of the fractional
differential equation (Theorem~\ref{thm:stb}). The proof
of both results make use of the following Gronwall-type result for
fractional integral equations:

\begin{lemma}[\cite{Kia:The:10}]
\label{Gron}
Let $\alpha$, $T$, $\epsilon_1$, $\epsilon_2\in\Bbb{R}^{+}$.
Moreover, assume that $\delta:[0,T]\rightarrow\Bbb{R}$ is a
continuous function satisfying the inequality
\begin{equation*}
|\delta(t)|\leq\epsilon_1+\frac{\epsilon_2}{\Gamma(\alpha)}
\int_0^t(t-x)^{\alpha-1}|\delta(x)|\,dx
\end{equation*}
for all $t\in[0,T]$. Then,
$|\delta(t)|\leq\epsilon_1E_{\alpha,1}(\epsilon_2t^\alpha)$
for $t\in[0,T]$.
\end{lemma}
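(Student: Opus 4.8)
The plan is to use the classical successive-substitution (Picard-type iteration) argument for Gronwall inequalities, adapted to the fractional-integral kernel. I write $I$ for the nonnegative linear operator $(I\phi)(t):=\frac{\epsilon_2}{\Gamma(\alpha)}\int_0^t(t-x)^{\alpha-1}\phi(x)\,dx$ acting on $C[0,T]$, and note that $\phi\geq\psi\geq 0$ on $[0,T]$ implies $I\phi\geq I\psi$ pointwise. The hypothesis then reads $|\delta|\leq\epsilon_1+I|\delta|$ on $[0,T]$. Applying $I$ repeatedly and using this monotonicity, one obtains, for every $N\in\mathbb{N}$,
\[
|\delta(t)|\leq\sum_{k=0}^{N-1}I^k(\epsilon_1)(t)+I^N(|\delta|)(t),\qquad t\in[0,T],
\]
where $I^k$ denotes the $k$-fold composition and $I^0$ is the identity.

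The key computation is to evaluate $I^k$ on the constant function $\epsilon_1$. Using the Beta-integral identity $\frac{1}{\Gamma(\alpha)}\int_0^t(t-x)^{\alpha-1}x^{\beta}\,dx=\frac{\Gamma(\beta+1)}{\Gamma(\beta+\alpha+1)}t^{\beta+\alpha}$ for $\beta\geq 0$, an easy induction on $k$ gives
\[
I^k(\epsilon_1)(t)=\epsilon_1\,\frac{(\epsilon_2 t^{\alpha})^k}{\Gamma(\alpha k+1)}.
\]
Hence $\sum_{k=0}^{N-1}I^k(\epsilon_1)(t)\to\epsilon_1 E_{\alpha,1}(\epsilon_2 t^{\alpha})$ as $N\to\infty$, recognising the defining series of the Mittag--Leffler function.

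It remains to show that the remainder $I^N(|\delta|)(t)$ tends to $0$ as $N\to\infty$. Since $\delta$ is continuous on the compact interval $[0,T]$, it is bounded, say $|\delta|\leq M$; by monotonicity and the displayed formula, $0\leq I^N(|\delta|)(t)\leq M\,\frac{(\epsilon_2 T^{\alpha})^N}{\Gamma(\alpha N+1)}$, which converges to $0$ because $\Gamma(\alpha N+1)$ eventually dominates $(\epsilon_2 T^\alpha)^N$ (equivalently, it is the general term of the convergent series for $E_{\alpha,1}(\epsilon_2 T^\alpha)$). Letting $N\to\infty$ in the inequality for $|\delta(t)|$ then yields $|\delta(t)|\leq\epsilon_1 E_{\alpha,1}(\epsilon_2 t^{\alpha})$ for all $t\in[0,T]$. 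The only mildly delicate points are justifying the interchange of $I$ with the finite sum and the monotone iteration of the inequality — both routine, since all functions involved are continuous and nonnegative — together with a clean induction for $I^k(\epsilon_1)$; I do not expect any serious obstacle.
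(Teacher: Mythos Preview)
Your argument is correct: the Picard-type iteration, the explicit evaluation of $I^k(\epsilon_1)$ via the semigroup property of the Riemann--Liouville integral (or the Beta identity you quote), and the remainder estimate are all sound. Note, however, that the paper does not supply its own proof of this lemma --- it is merely quoted from Diethelm's monograph \cite{Kia:The:10} --- so there is nothing to compare against here; the iteration you give is in fact the standard proof found in that reference.
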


\begin{theorem}
\label{Conve}
Let $x$ be the solution of
\begin{equation}
\label{Pro}
{}_0^CD_{t}^{\alpha}x(t)=f(t,x(t)),\ x^{(k)}(0)=0,
\quad k=0,\ldots, \lceil\alpha\rceil-1,
\end{equation}
where $f$ satisfies a Lipschitz condition in its second argument
on $[0,1]$. If $x_n(t)=B_n(x;t)$ is the approximate solution of
\eqref{Pro}, then $x_n(t)$ is convergent to $x(t)$ as
$n\rightarrow\infty$, and $|x_n(t)-x(t)|\leq \mathcal{O}(h)$.
\end{theorem}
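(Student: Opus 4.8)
The plan is to compare the exact solution $x$ and the approximate solution $x_n = B_n(x;\cdot)$ by writing an integral equation for each and then invoking the fractional Gronwall inequality (Lemma~\ref{Gron}). First I would reformulate \eqref{Pro}: since the initial conditions vanish, the Riemann--Liouville and Caputo derivatives coincide, so applying ${}_0I_t^\alpha$ to both sides of ${}_0^CD_t^\alpha x(t)=f(t,x(t))$ gives the equivalent Volterra equation
\begin{equation*}
x(t)=\frac{1}{\Gamma(\alpha)}\int_0^t(t-\tau)^{\alpha-1}f(\tau,x(\tau))\,d\tau .
\end{equation*}
The approximate solution, by construction, satisfies ${}_0^CD_t^\alpha x_n(t_j)=f(t_j,x_n(t_j))$ at the collocation nodes; I would argue that in the limit (or up to a controllable residual) $x_n$ satisfies ${}_0^CD_t^\alpha x_n(t)=f(t,x_n(t))+r_n(t)$ with a remainder $r_n$, and hence
\begin{equation*}
x_n(t)=\frac{1}{\Gamma(\alpha)}\int_0^t(t-\tau)^{\alpha-1}\bigl(f(\tau,x_n(\tau))+r_n(\tau)\bigr)\,d\tau + \rho_n(t),
\end{equation*}
where $\rho_n$ accounts for the fact that $B_n(x;\cdot)$ need not exactly satisfy the integral equation.

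Next I would set $\delta(t)=x_n(t)-x(t)$, subtract the two integral equations, and use the Lipschitz condition $|f(\tau,x_n)-f(\tau,x)|\le L|\delta(\tau)|$ to obtain
\begin{equation*}
|\delta(t)|\le \epsilon_1 + \frac{L}{\Gamma(\alpha)}\int_0^t(t-\tau)^{\alpha-1}|\delta(\tau)|\,d\tau,
\end{equation*}
where $\epsilon_1$ bounds all the $n$-dependent error contributions uniformly on $[0,1]$. The crucial quantitative input is that $\epsilon_1=\mathcal{O}(h)$: here I would lean on Theorem~\ref{MThCa} (uniform convergence of ${}_0^CD_t^\alpha B_n(x)$ to ${}_0^CD_t^\alpha x$) together with the sharper asymptotic rate in Theorem~\ref{MThm}, which gives $\|{}_0^CD_t^\alpha B_n(x)-{}_0^CD_t^\alpha x\|=\mathcal{O}(1/n)=\mathcal{O}(h)$ for $x\in AC^{m+2}[0,1]$, and the standard Bernstein estimate $\|x-B_n(x)\|=\mathcal{O}(1/n)$. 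Applying Lemma~\ref{Gron} with $\epsilon_2=L$, $T=1$ then yields $|\delta(t)|\le \epsilon_1 E_{\alpha,1}(Lt^\alpha)\le \epsilon_1 E_{\alpha,1}(L)$, and since $E_{\alpha,1}(L)$ is a finite constant independent of $n$, we conclude $|x_n(t)-x(t)|\le \mathcal{O}(h)$ uniformly on $[0,1]$, which gives both convergence as $n\to\infty$ and the claimed rate.

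The main obstacle I expect is making rigorous the passage from "$x_n$ satisfies the discrete collocation equations" to "$x_n$ satisfies a perturbed integral equation with an $\mathcal{O}(h)$ residual." In the statement $x_n$ is taken to be exactly $B_n(x;t)$, so strictly speaking $x_n$ is defined from the \emph{unknown} exact solution; the honest reading is that the residual $r_n(t)={}_0^CD_t^\alpha B_n(x;t)-f(t,B_n(x;t))$ is controlled by writing $r_n(t)=\bigl({}_0^CD_t^\alpha B_n(x;t)-{}_0^CD_t^\alpha x(t)\bigr)+\bigl(f(t,x(t))-f(t,B_n(x;t))\bigr)$, the first bracket being $\mathcal{O}(h)$ by Theorem~\ref{MThm} and the second $\mathcal{O}(h)$ by the Lipschitz property and $\|x-B_n(x)\|=\mathcal{O}(h)$. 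Care is also needed near $t=0$, where the kernel $(t-\tau)^{\alpha-1}$ is singular for $\alpha<1$; but the singularity is integrable, the bound $\int_0^t(t-\tau)^{\alpha-1}\,d\tau=t^\alpha/\alpha$ is uniform on $[0,1]$, and this is exactly the situation covered by Lemma~\ref{Gron}, so no additional difficulty arises there.
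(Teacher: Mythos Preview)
Your approach is essentially the same as the paper's: both rewrite $x$ and $x_n$ as Volterra integral equations, subtract, use the Lipschitz condition on $f$, and close with the fractional Gronwall inequality (Lemma~\ref{Gron}) to get $|x_n(t)-x(t)|\le \mathcal{O}(h)\,E_{\alpha,1}(\Lambda t^\alpha)$. The paper simply asserts ${}_0^CD_t^\alpha x_n(t)=f(t,x_n(t))+\mathcal{O}(h)$ ``as in the proof of Theorem~\ref{MThm}'' and proceeds from there, whereas your explicit decomposition of the residual $r_n$ into the Caputo-approximation error (Theorem~\ref{MThm}) plus the Lipschitz-controlled term $f(t,x)-f(t,B_n(x))$ makes that step more transparent; your extra term $\rho_n$ is in fact unnecessary once you know $x_n^{(k)}(0)=0$, since then ${}_0I_t^\alpha\,{}_0^CD_t^\alpha x_n=x_n$ exactly.
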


\begin{proof}
As in the proof of  Theorem~\ref{MThm}, if
$x^{(k)}_n(0)=0$, $k=0,\ldots, \lceil\alpha\rceil-1$, then
$$
{}_0^CD_{t}^{\alpha}x_n(t)={}_0D_{t}^{\alpha}x_n(t)=f(t,x_n(t))+\mathcal{O}(h).
$$
If we write $x$ and $x_n$ in the form of the
equivalent Volterra integral equation
\begin{equation*}
x(t)=\frac{1}{\Gamma(\alpha)}\int_0^t(t-z)^{\alpha-1}f(z,x(z))\,dz
\end{equation*}
and
\begin{equation*}
x_n(t)=\frac{1}{\Gamma(\alpha)}\int_0^t(t-z)^{\alpha-1}\left(f(z,x_n(z))
+\mathcal{O}(h)\right)\,dz,
\end{equation*}
subtracting we obtain the relation
\begin{equation*}
x(t)-x_n(t) =\frac{1}{\Gamma(\alpha)}\int_0^t(t-z)^{\alpha-1}\left(f(z,x(z))-f(z,x_n(z))\right)\,dz
+\mathcal{O}(h)\left(\frac{1}{\Gamma(\alpha)}\int_0^t(t-z)^{\alpha-1}\,dz\right).
\end{equation*}
Using the Lipschitz condition on $f$ in the first term on
the right-hand side,
\begin{equation*}
\left|\frac{1}{\Gamma(\alpha)}\int_0^t(t-z)^{\alpha-1}\left(f(z,x(z))-f(z,x_n(z))\right)\,dz\right|
< \Lambda\left(\frac{1}{\Gamma(\alpha)}\int_0^t(t-z)^{\alpha-1}| x(z)-x_n(z)|\,dz\right) .
\end{equation*}
If we put $Y(t)=|x(t)-x_n(t)|$, then we have
\begin{equation}
\label{1}
|Y(t)|\leq \mathcal{O}(h)+\Lambda\left(\frac{1}{\Gamma(\alpha)}
\int_0^t(t-z)^{\alpha-1}Y(z)\,dz\right).
\end{equation}
In view of Lemma~\ref{Gron}, \eqref{1} allow us to conclude that
$|Y(t)|\leq\mathcal{O}(h)E_{1,\alpha}(\Lambda t^\alpha)$
for $t\in[0,1]$, and the proof is complete.
\end{proof}

\begin{theorem}
\label{thm:stb}
Let $x_n(t)$ and $x'_n(t)$ be approximate solutions of \eqref{Pro}
with the right-hand side of the fractional differential equation
given by $f$ and $f'$, respectively. If $f$ and $f'$ satisfy
a Lipschitz condition in its second argument on $[0,1]$, then
$|x_n(t)-x'_n(t)|\leq C\|f-f'\|$ for any $n$.
\end{theorem}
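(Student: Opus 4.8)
The plan is to mimic the structure of the proof of Theorem~\ref{Conve}, replacing the $\mathcal{O}(h)$ discrepancy term there by the quantity $\|f-f'\|$ that now measures the mismatch between the two problems. First I would record that, by the same argument used at the start of the proof of Theorem~\ref{Conve} (zero initial data making the Caputo and Riemann--Liouville derivatives agree, together with the approximation property), each of $x_n$ and $x_n'$ satisfies an equivalent Volterra integral equation up to an $\mathcal{O}(h)$ term, namely
\begin{equation*}
x_n(t)=\frac{1}{\Gamma(\alpha)}\int_0^t(t-z)^{\alpha-1}\bigl(f(z,x_n(z))+\mathcal{O}(h)\bigr)\,dz,
\qquad
x_n'(t)=\frac{1}{\Gamma(\alpha)}\int_0^t(t-z)^{\alpha-1}\bigl(f'(z,x_n'(z))+\mathcal{O}(h)\bigr)\,dz.
\end{equation*}

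Next I would subtract these two identities and insert and subtract $f(z,x_n'(z))$ inside the integrand, splitting the difference $f(z,x_n(z))-f'(z,x_n'(z))$ into $\bigl(f(z,x_n(z))-f(z,x_n'(z))\bigr)+\bigl(f(z,x_n'(z))-f'(z,x_n'(z))\bigr)$. The first piece is controlled by the Lipschitz constant $\Lambda$ of $f$ times $|x_n(z)-x_n'(z)|$, exactly as in Theorem~\ref{Conve}; the second piece is bounded pointwise by $\|f-f'\|$. Writing $Z(t)=|x_n(t)-x_n'(t)|$ and using $\frac{1}{\Gamma(\alpha)}\int_0^t(t-z)^{\alpha-1}\,dz=\frac{t^\alpha}{\Gamma(\alpha+1)}\le\frac{1}{\Gamma(\alpha+1)}$ on $[0,1]$, I obtain an inequality of the form
\begin{equation*}
Z(t)\le \frac{\|f-f'\|}{\Gamma(\alpha+1)}+\mathcal{O}(h)+\frac{\Lambda}{\Gamma(\alpha)}\int_0^t(t-z)^{\alpha-1}Z(z)\,dz,
\end{equation*}
where the additive constant $\epsilon_1$ plays the role of $\frac{\|f-f'\|}{\Gamma(\alpha+1)}+\mathcal{O}(h)$ and $\epsilon_2=\Lambda$.

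Then I would apply Lemma~\ref{Gron} (the fractional Gronwall inequality) to conclude $Z(t)\le\bigl(\tfrac{\|f-f'\|}{\Gamma(\alpha+1)}+\mathcal{O}(h)\bigr)E_{\alpha,1}(\Lambda t^\alpha)$ for $t\in[0,1]$. Since $E_{\alpha,1}(\Lambda t^\alpha)\le E_{\alpha,1}(\Lambda)=:M$ is a finite constant depending only on $\alpha$ and $\Lambda$, and since the $\mathcal{O}(h)$ term is negligible (it can either be absorbed for $n$ large, or, more cleanly, one observes that with $x_n(t)=B_n(x;t)$ and $x_n'(t)=B_n(x';t)$ the same Bernstein approximation error appears in both and cancels in the subtraction, leaving no $h$-term at all), one gets $|x_n(t)-x_n'(t)|\le C\|f-f'\|$ with $C=M/\Gamma(\alpha+1)$, uniformly in $t\in[0,1]$ and independent of $n$, which is the claim.

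The main obstacle is the bookkeeping around the $\mathcal{O}(h)$ remainder: to get a bound that is genuinely independent of $n$ and proportional to $\|f-f'\|$ alone, one must argue that the two approximate problems are discretized with the \emph{same} scheme so that the consistency errors cancel on subtraction, rather than merely adding up. I would make this explicit by writing both $x_n$ and $x_n'$ via the identical representation ${}_0^CD_t^\alpha B_n(\cdot;t)={}_0D_t^\alpha B_n(\cdot;t)$ and noting that the error term ${}_0^CD_t^\alpha B_n(x;t)-f(t,x_n(t))$ versus ${}_0^CD_t^\alpha B_n(x';t)-f'(t,x_n'(t))$ both arise from the Voronovskaya-type estimate of Theorem~\ref{MThm} applied to the respective exact solutions, and in the subtracted Volterra equation these contribute a term bounded by a multiple of $\|f-f'\|$ plus something that vanishes as $n\to\infty$; for the stated inequality "for any $n$" one then keeps the constant $C$ uniform by taking it to dominate this combined contribution. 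Everything else is a routine repetition of the estimates already carried out for Theorem~\ref{Conve}.
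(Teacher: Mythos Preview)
Your approach is essentially identical to the paper's: write both approximate solutions as Volterra integral equations with $\mathcal{O}(h)$ residuals, subtract, split $f(z,x_n(z))-f'(z,x_n'(z))$ by inserting $f(z,x_n'(z))$, apply the Lipschitz bound to the first piece and $\|f-f'\|$ to the second, and close with Lemma~\ref{Gron}. The paper does not argue any cancellation of the two $\mathcal{O}(h)$ terms; it simply carries $\mathcal{O}(h)+L'\|f-f'\|$ through the Gronwall step and absorbs the result into the single constant $C$ in the last line, so your extended discussion of cancellation is more scrupulous than---but not part of---the paper's own argument.
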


\begin{proof}
As in the proof of  Theorem~\ref{MThm}, if
$x^{(k)}_n(0)=0,\ k=0,1,\ldots,
\lceil\alpha\rceil-1$, then
${}_0D_{t}^{\alpha}x_n(t)=f(t, x_n(t))+\mathcal{O}(h)$
and ${}_0D_{t}^{\alpha}x'_n(t)=f'(t,x'_n(t))+\mathcal{O}(h)$.
Now, we write the approximate solutions $x_n$ and $x'_n$ in
the Volterra integral equation form:
\begin{equation}
\label{V11}
x_n(t)=\frac{1}{\Gamma(\alpha)}\int_0^t(t-z)^{\alpha-1}\left(f(z,x_n(z))
+\mathcal{O}(h)\right)\,dz
\end{equation}
and
\begin{equation}
\label{V22}
x'_n(t)=\frac{1}{\Gamma(\alpha)}\int_0^t(t-z)^{\alpha-1}\left(f'(z,x'_n(z))
+\mathcal{O}(h)\right)\,dz.
\end{equation}
Subtracting \eqref{V22} from \eqref{V11}, we obtain the relation
\begin{equation*}
\begin{split}
x_n(t)-x'_n(t)
&= \int_0^t\frac{f(z,x_n(z))-f'(z,x'_n(z))}{\Gamma(\alpha){(t-z)^{1-\alpha}}}\,dz
+\mathcal{O}(h)\left(\frac{1}{\Gamma(\alpha)}\int_0^t(t-z)^{\alpha-1}\,dz\right)\\
&=\frac{1}{\Gamma(\alpha)}\int_0^t\frac{f(z,x_n(z))-f(z,x'_n(z))}{{(t-z)^{1-\alpha}}}\,dz\\
&\quad +\frac{1}{\Gamma(\alpha)}\int_0^t\frac{f(z,x'_n(z))-f'(z,x'_n(z))}{{(t-z)^{1-\alpha}}}\,dz
+\mathcal{O}(h)\frac{t^\alpha}{\Gamma(1+\alpha)}, \quad t\in[0,1].
\end{split}
\end{equation*}
Using the Lipschitz conditions on $f$ in the first term on the right-hand side, we get
\begin{equation*}
\left|\frac{1}{\Gamma(\alpha)}\int_0^t\frac{f(z,x_n(z))-f(z,x'_n(z))}{{(t-z)^{1-\alpha}}}\,dz\right|
< \frac{\Lambda}{\Gamma(\alpha)}\int_0^t(t-z)^{\alpha-1}| x_n(z)-x'_n(z)|\,dz
\end{equation*}
by evaluating the integral representations of the fractional
derivatives of $x_n(t)$ and $x'_n(t)$. For the second term on
the right-hand side, we have
\begin{equation*}
\begin{split}
\left|\frac{1}{\Gamma(\alpha)}\int_0^t
\frac{f(z,x'_n(z))-f'(z,x'_n(z))}{{(t-z)^{1-\alpha}}}\,dz\right|
&<L\|f-f'\|\left(\frac{1}{\Gamma(\alpha)}\int_0^t(t-z)^{\alpha-1}\,dz\right)\\
&=L\frac{t^\alpha}{\Gamma(1+\alpha)}\|f-f'\|
\end{split}
\end{equation*}
for $t\in[0,1]$. If we put $Y(t)=|x_n(t)-x'_n(t)|$, then
\begin{equation}
\label{111}
|Y(t)| \leq \mathcal{O}(h)+L'\|f-f'\|+\Lambda\left(\frac{1}{\Gamma(\alpha)}
\int_0^t(t-z)^{\alpha-1}Y(z)\,dz\right).
\end{equation}
In view of Lemma~\ref{Gron}, \eqref{111} allow us to conclude that
\begin{equation*}
|Y(t)|\leq(\mathcal{O}(h)+L'\|f-f'\|)E_{1,\alpha}(\Lambda
t^\alpha)\leq C\|f-f'\|, \quad t\in[0,1].
\end{equation*}
\end{proof}


\subsection{Illustrative examples}

We give four examples of application of our method.

\begin{example}
\label{FDEex1}
The following problem is studied in \cite{SchGau:06:Ona}:
$c ({}_0^CD_{t}^{\alpha}x)(t)+kx(t)=f(t)$, $0\leq t\leq1$,
subject to $x(0)=0$ and where $\alpha=\frac{1}{2}$, $c=100$, $k=10$, and $f(t) \equiv 1$.
The exact solution is shown to be
$x_{exact}(t)=\frac{1}{k}\left(1-E_{\alpha,1}\left(-\frac{k}{c}t^\alpha\right)\right)$
\cite{SchGau:06:Ona}. Fig.~\ref{FigFD1} compares the exact solution
with the numerical approximations obtained by our method.
\begin{figure}
\center\includegraphics[scale=0.30]{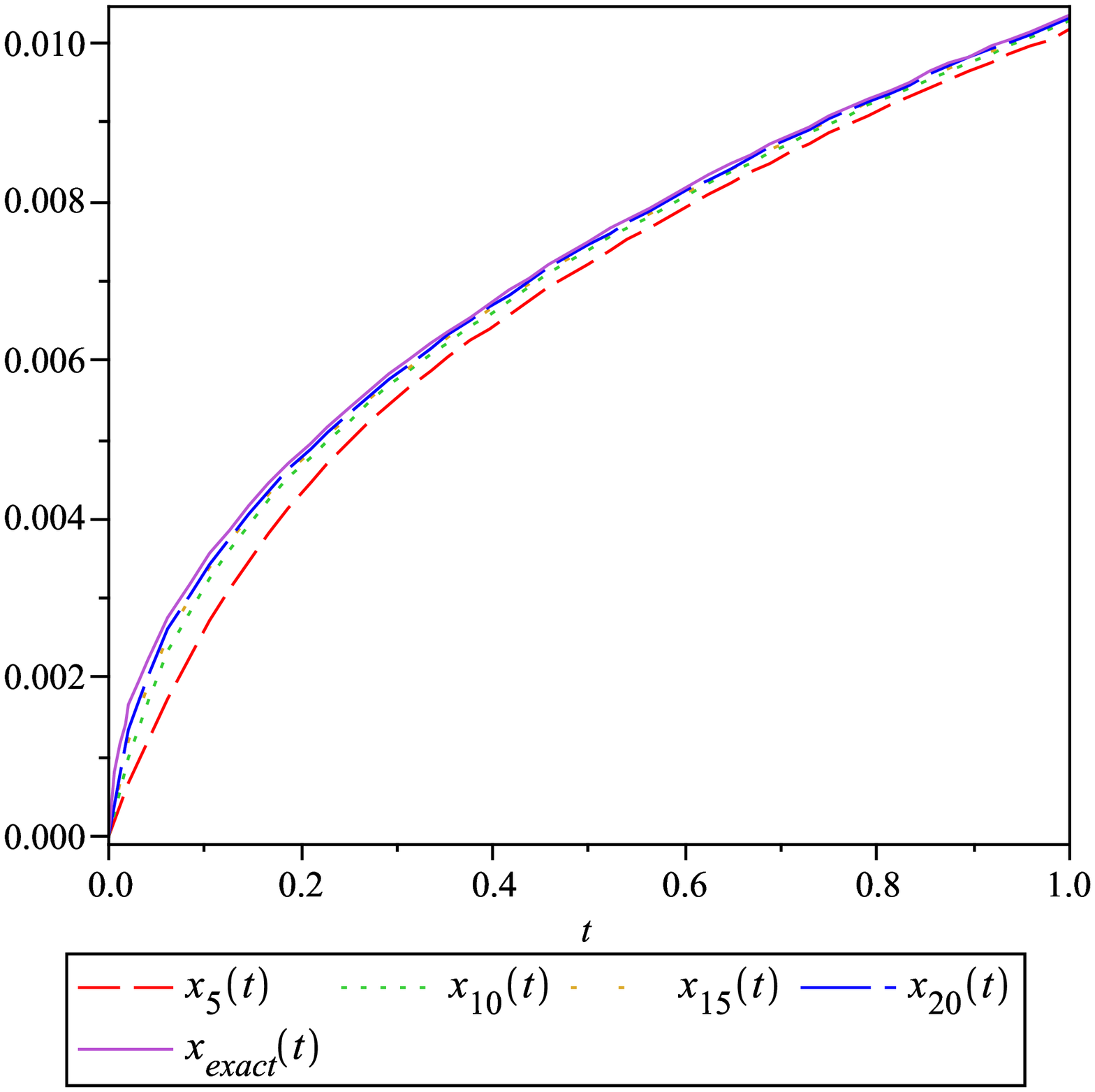}\includegraphics[scale=0.30]{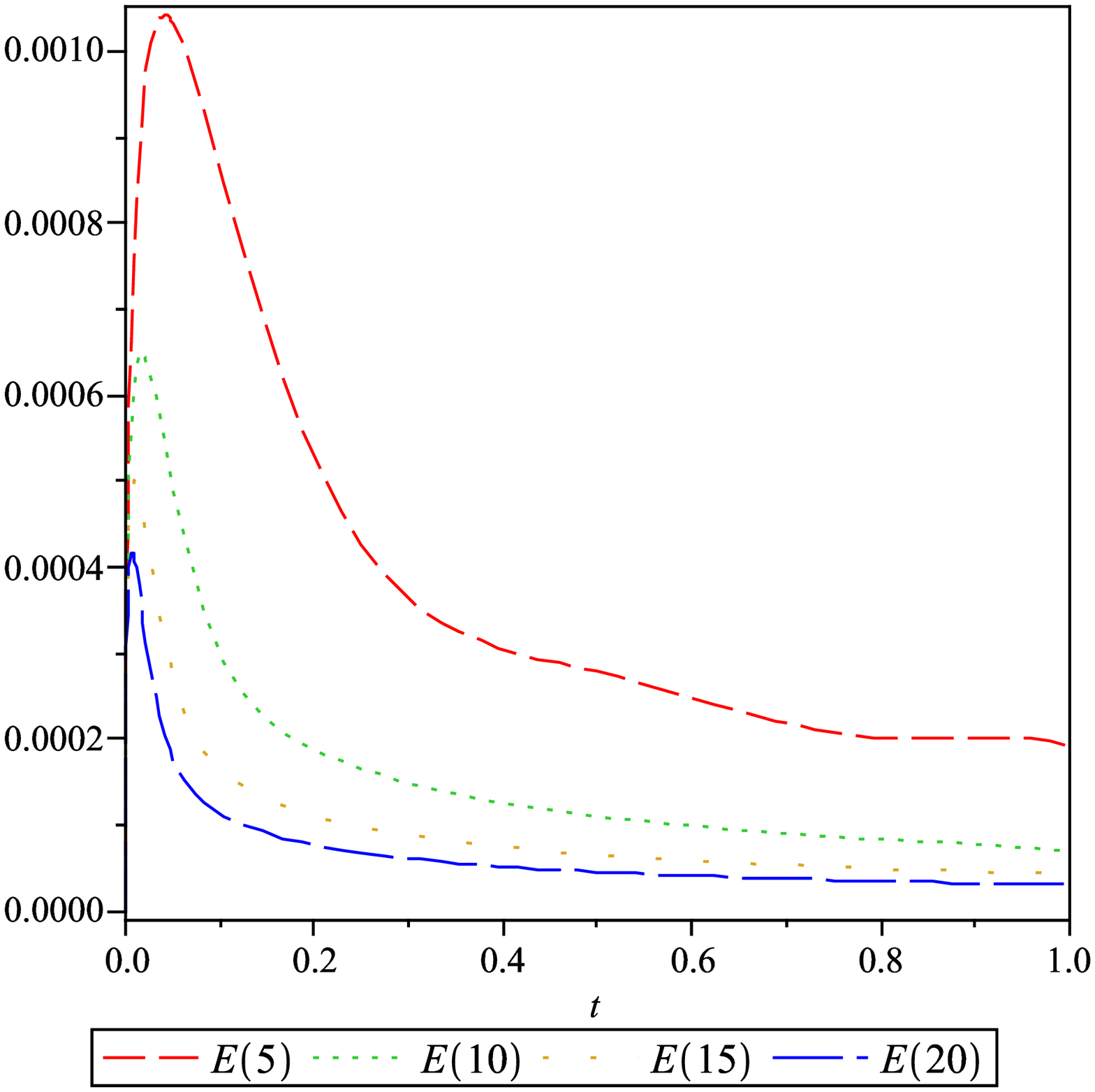}
\caption{Example~\ref{FDEex1}: $x_{exact}$ versus $x_n(t)=B_n(x;t)$ (left)
and $E(n)=|x_{exact}(t)-x_n(t)|$ (right),
$n = 5,10,15,20$.}\label{FigFD1}
\end{figure}
\end{example}

\begin{example}
\label{FDEex2}
As a second example, consider the nonlinear ordinary
differential equation
$$
({}_0^CD_{t}^{\alpha}x)(t)+k(x(t))^2=f(t)
$$
subject to $x(0)=0$ and $x^{(1)}(0)=0$,
where
$$
f(t)= \frac{120 {t}^{5-\alpha}}{\Gamma\left( 6-\alpha\right)}
-\frac{72{t}^{4-\alpha}}{\Gamma\left(5-\alpha\right)}
+\frac{12{t}^{3-\alpha}}{\Gamma\left(4-\alpha\right)}
+k \left({t}^{5}-3\,{t}^{4}+2\,{t}^{3}\right)^{2}.
$$
Following \cite{LinLiu:07:Fra}, we take
$k =1$ and $\alpha = \frac{3}{2}$.
The exact solution is given by
$x_{exact}(t)={t}^{5}-3t^4+2t^3$ \cite{LinLiu:07:Fra}.
Fig.~\ref{FigFD2} plots the results obtained.
\begin{figure}
\center\includegraphics[scale=0.30]{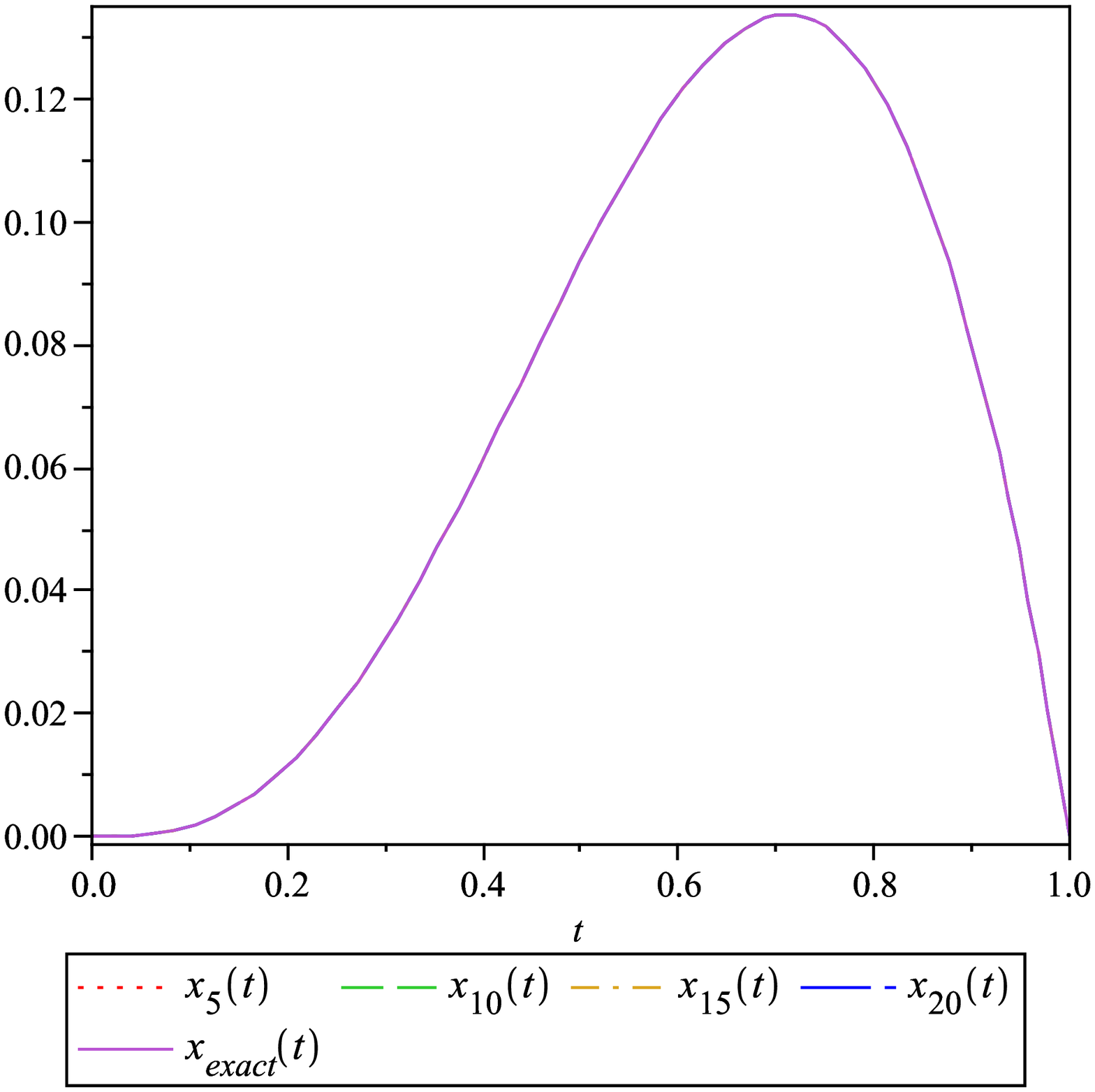}\includegraphics[scale=0.30]{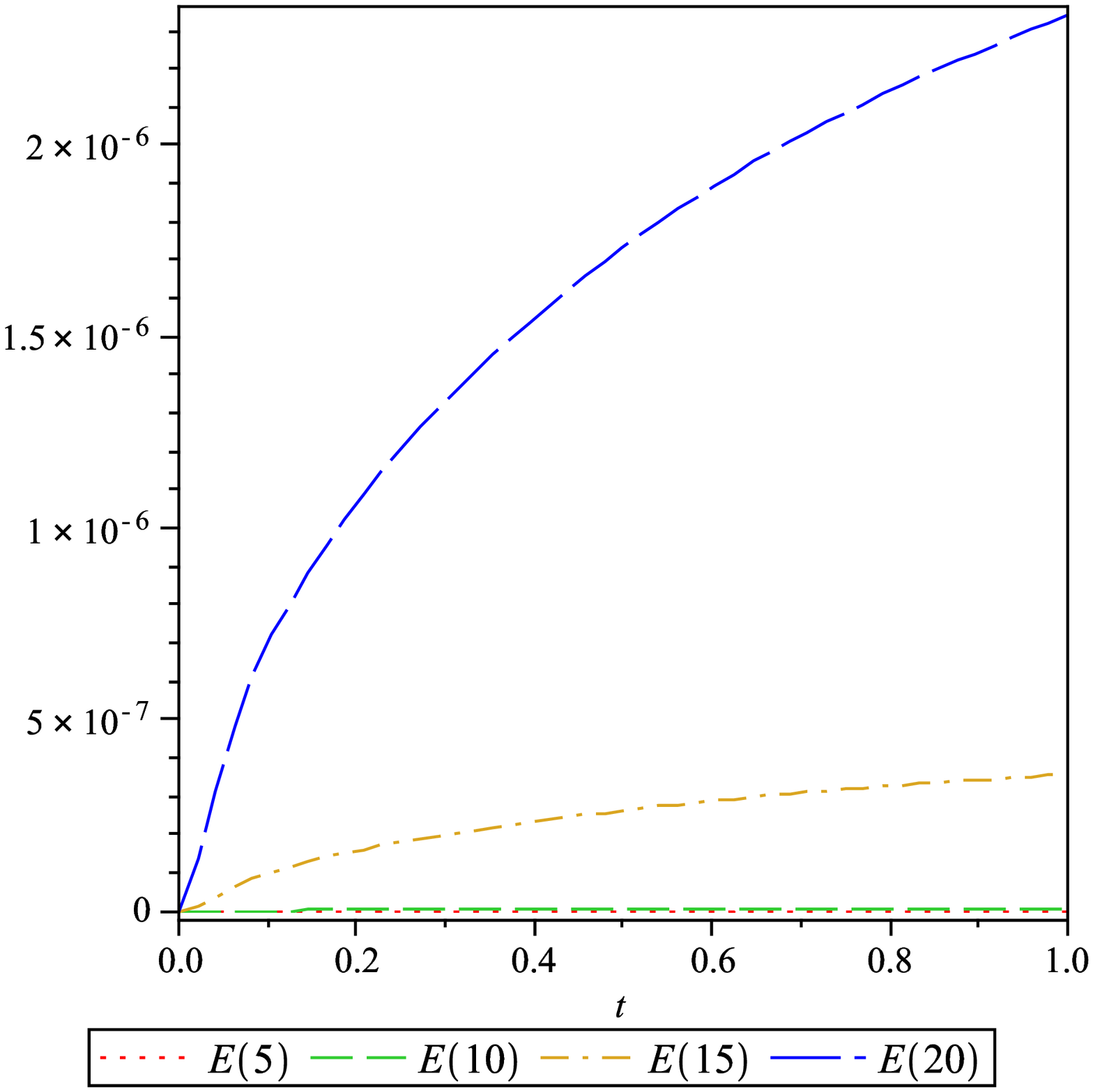}
\caption{Example~\ref{FDEex2}: $x_{exact}(t)$ versus $x_n(t)=B_n(x;t)$ (left) and
$E(n)=|x_{exact}(t)-x_n(t)|$ (right), $n=5,10,15,20$.}\label{FigFD2}
\end{figure}
It is worthwhile to note that the best numerical solution is obtained for $n=5$
because the exact solution is a polynomial of degree five.
Fig.~\ref{FigFD2} (right) shows that $E(n)$ grows for $n>5$.
\end{example}

\begin{example}
\label{FDEex33}
Consider the fractional oscillation equation
$({}_0^CD_{t}^{\alpha}x)(t)+x(t)=te^{-t}$
subject to $x(0)=0$ and $x^{(1)}(0)=0$.
The exact solution is known to be
$x_{exact}(t)=\int_0^tG(t-x)xe^{-x}\,dx$
with $G(t)=t^{\alpha-1}E_{\alpha,\alpha}(-t^\alpha)$ \cite{LinLiu:07:Fra}.
In Fig.~\ref{FigFD3} we compare, for several values of $\alpha$,
the exact solution with the approximation obtained by our method with $n = 15$
(left), and the case $\alpha = \frac{3}{2}$
for various values of $n$ (right).
\begin{figure}
\center
\includegraphics[scale=0.30]{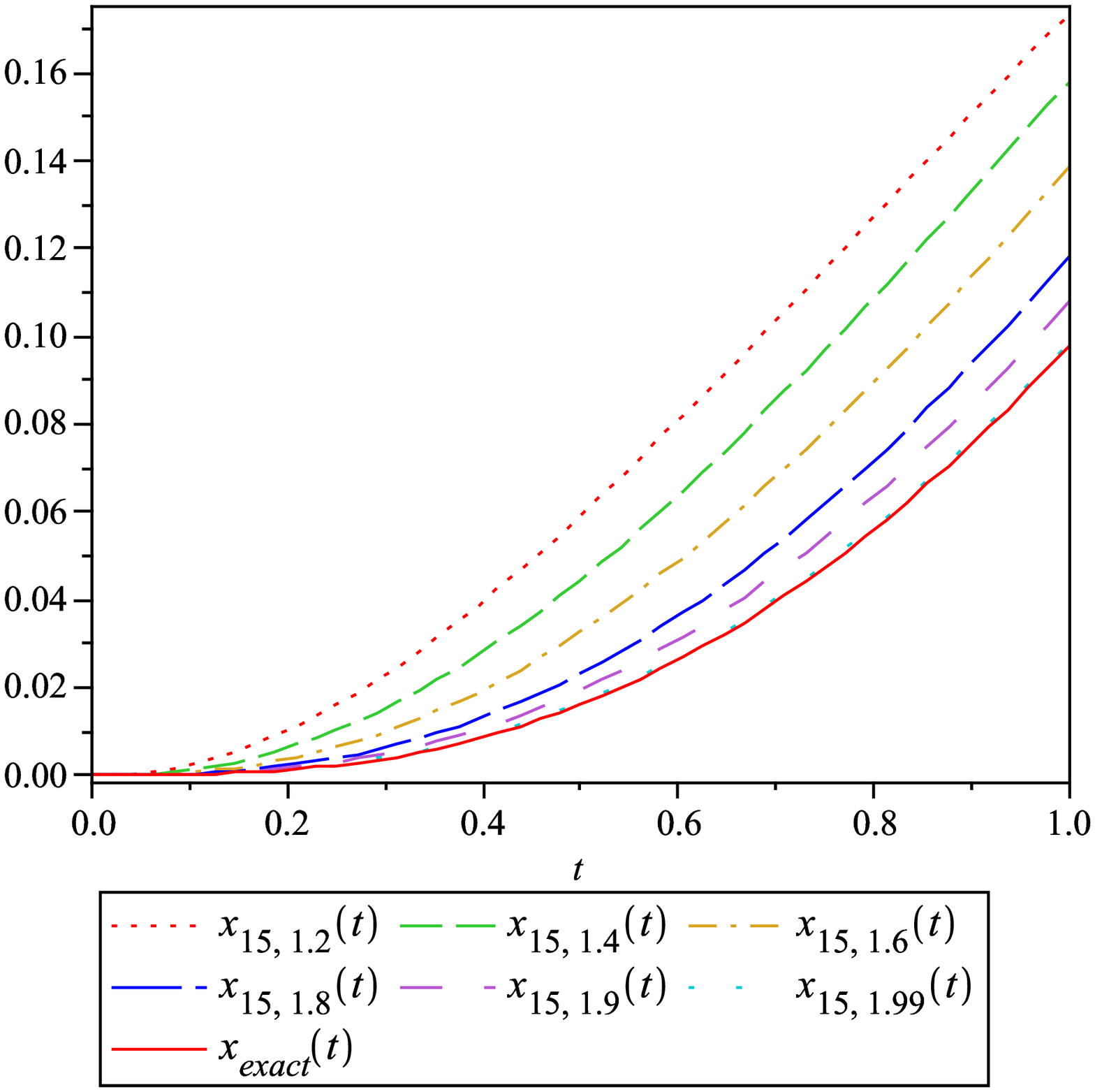}\includegraphics[scale=0.30]{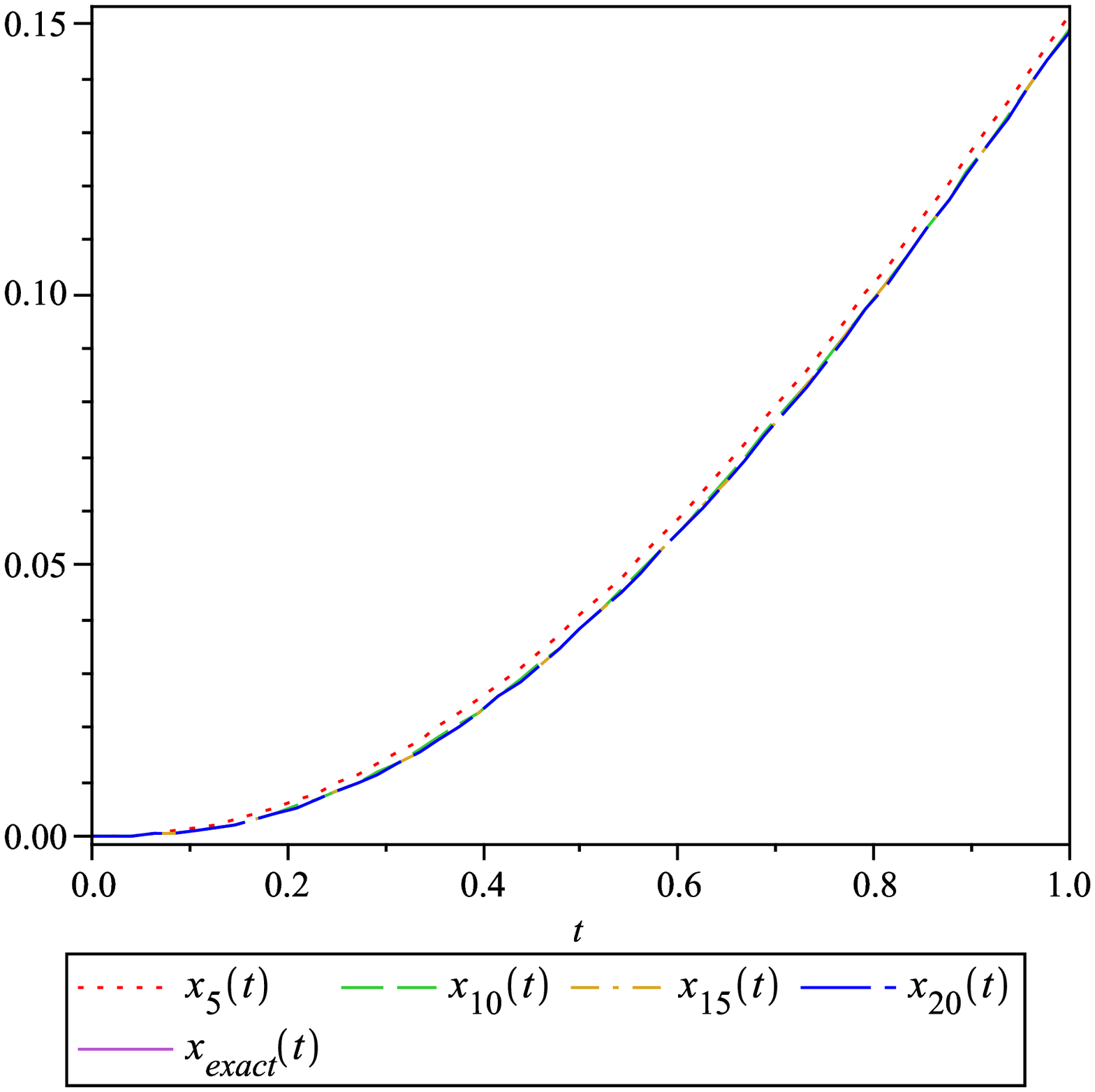}
\caption{Example~\ref{FDEex33}: numerical solution $x_{n,\alpha}(t)$ with $n = 15$
versus the exact solution $x_{exact}(t)$ (left), and
$x_{n}(t) = x_{n,\alpha}(t)$ with $\alpha = \frac{3}{2}$
versus the exact solution $x_{exact}(t)$ (right).}\label{FigFD3}
\end{figure}
\end{example}

\begin{example}
\label{FDEex44}
As our last example, we consider the following
fractional nonlinear differential equation
borrowed from \cite{Kia:The:10}:
$({}_0^CD_{t}^{0.28}x)(t)+(t-0.5)\sin(x(t))=0.8t^3$
subject to a nonzero initial condition $x(0)=x_0$.
The exact solution to this problem is unknown. Let
$z(t)=x(t)-x_0$. Then the problem
is transformed into
$({}_0^CD_{t}^{0.28}z)(t)+(t-0.5)\sin(z(t)+x_0)=0.8t^3$
subject to the zero initial condition $z(0)=0$.
In Fig.~\ref{FigFD4} we show the numerical solutions
($n=10$ and $n = 20$) for several
values of the initial condition:
$x_0=1.2$, $x_0=1.3$, $x_0=1.4$, $x_0=1.5$, and $x_0=1.6$.
\begin{figure}
\center
\includegraphics[scale=0.30]{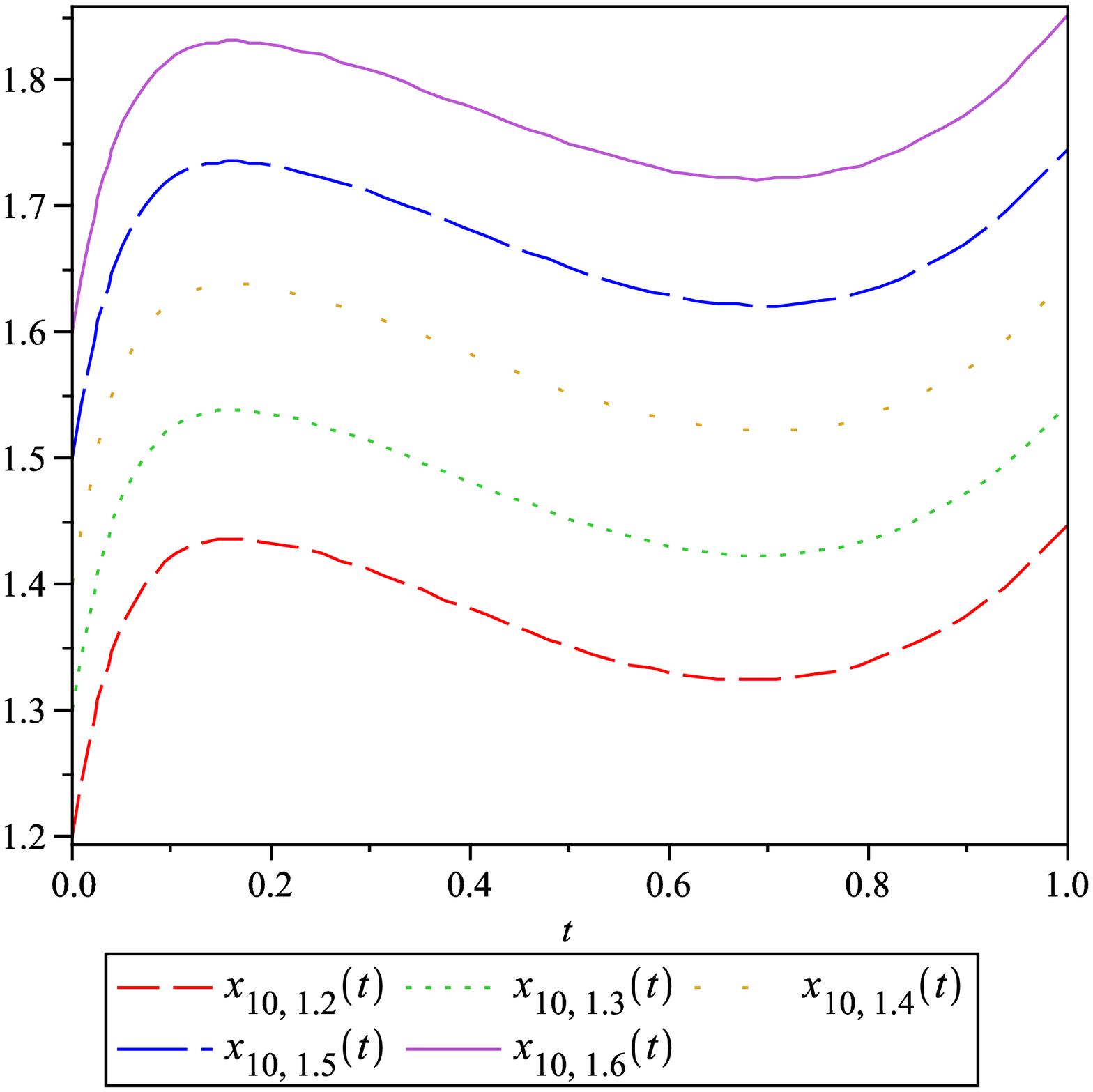}\includegraphics[scale=0.30]{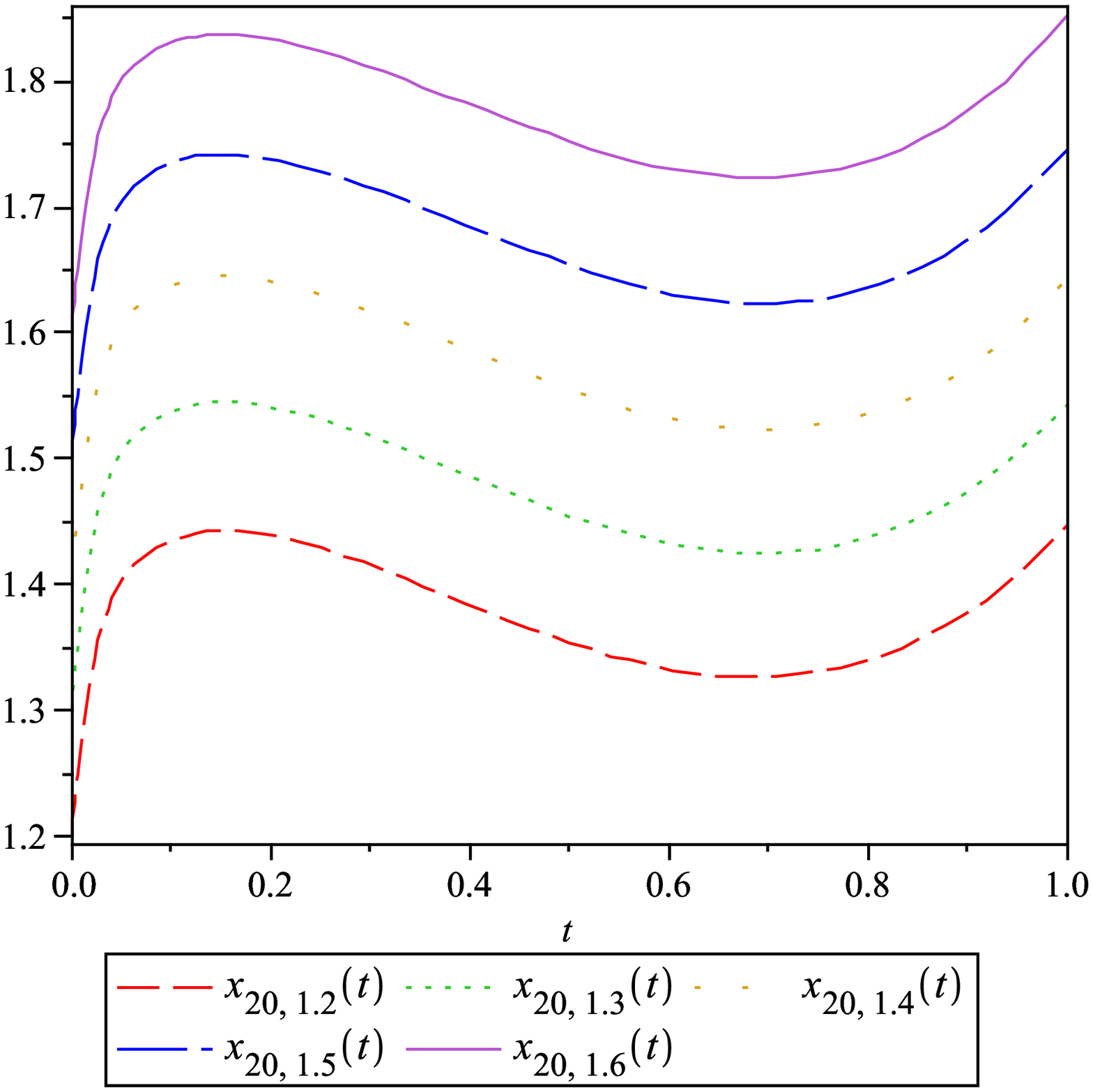}
\caption{Example~\ref{FDEex44}: numerical solutions for $n=10$ (left)
and $n = 20$ (right), $x_0=1.2$, $x_0=1.3$, $x_0=1.4$, $x_0=1.5$, $x_0=1.6$.}\label{FigFD4}
\end{figure}
\end{example}


\section*{Acknowledgements}

This work was partially supported by project PEst-C/MAT/UI4106/2011
with COMPETE number FCOMP-01-0124-FEDER-022690.



\label{pagefin}

\end{document}